\definecolor{BudGreen}{RGB}{112, 174, 110}
\theoremstyle{plain}
\newtheorem{thm}{Theorem}
\newtheorem{prop}{Proposition}[section]
\newtheorem{lem}[prop]{Lemma}
\newtheorem{rmk}[prop]{Remark}
\newcommand {\R} {\mathbb{R}} 
 \newcommand {\N} {\mathbb{N}}
\newcommand {\p} {\partial}
\DeclareMathOperator*{\esup}{ess\,sup}
\newcommand{\K}{\mathcal{K}}
\newcommand{\E}{E_{\epsilon}}
\newcommand{\F}{F_{\epsilon}}
\newcommand{\tv}{\tilde v}
\newcommand{\nabF}{\nabla F_{|F^{-1}(ry)}}
\newcommand{\osc}{{\rm osc}}
\DeclareMathOperator {\dist} {dist}
\DeclareMathOperator{\inte} {int}
\title[On the Effect of Geometry]{On the Effect of Geometry on Scaling Laws for a Class of Martensitic Phase Transformations}
\author{Janusz Ginster}
\address{Institut f\"ur Mathematik, Humboldt-Universit\"at zu Berlin, Unter den Linden 6, 10099 Berlin, Germany}
\email{janusz.ginster@hu-berlin.de}
\author{Angkana Rüland}
\address{Institute for Applied Mathematics and Hausdorff Center for Mathematics, University of Bonn, Endenicher Allee 60, 53115 Bonn, Germany}
\email{rueland@uni-bonn.de}
\author{Antonio Tribuzio}
\address{Institute for Applied Mathematics, University of Bonn, Endenicher Allee 60, 53115 Bonn, Germany}
\email{tribuzio@iam.uni-bonn.de}
\author{Barbara Zwicknagl}
\address{Institut f\"ur Mathematik, Humboldt-Universit\"at zu Berlin, Unter den Linden 6, 10099 Berlin, Germany}
\email{barbara.zwicknagl@hu-berlin.de}
\begin{document}

\begin{abstract}
We study scaling laws for singular perturbation problems associated with a class of two-dimensional martensitic phase transformations and deduce a domain dependence of the scaling law in the singular perturbation parameter. In these settings the respective scaling laws give rise to a selection principle for specific, highly symmetric domain geometries for the associated nucleation microstructure. More precisely, firstly, we prove a general lower bound estimate illustrating that in settings in which the domain and well geometry are \emph{incompatible} in the sense of the Hadamard-jump condition, then necessarily at least logarithmic losses in the singular perturbation parameter occur in the associated scaling laws. Secondly, for specific phase transformations in two-dimensional settings we prove that this gives rise to a dichotomy involving logarithmic losses in the scaling law for \emph{generic} domains and optimal linear scaling laws for \emph{very specific, highly compatible} polygonal domains. In these situations the scaling law thus gives important insight into optimal isoperimetric domains. We discuss both the geometrically linearized and nonlinear settings.
\end{abstract}

\maketitle

\section{Introduction}

It is the objective of this article to study the interaction of the domain geometry and the symmetry of the set of stress-free states for certain vector-valued, multi-well singular perturbation problems associated with martensitic phase transformations. More precisely, we deduce a \emph{quantitative effect of the domain geometry} on scaling laws associated with these singular perturbation problems in settings which allow for special, very low energy microstructures when considered with austenite boundary conditions. Prototypical examples include the geometrically linear and nonlinear hexagonal-to-rhombic phase transformations as well as the square-to-rectangular phase transformation. For this class of transformations, we prove the following dichotomy: On the one hand, for a \emph{very specific class of domains} which are in their symmetries ``optimally adapted'' to the phase transformation, one has an optimal \emph{linear} scaling behaviour in the singular perturbation parameter. On the other hand, however, \emph{``generic'' domains} necessitate additional boundary microstructure, giving rise to \emph{logarithmic losses} in the small perturbation parameter. In both settings, we deduce upper and lower bound estimates.
To the best of our knowledge this is one of the first results precisely quantifying the geometry dependence in non-degenerate domains by relating the domain geometry and the geometry of the martensitic energy wells. It provides a rigorous justification of the emergence of very special nucleation microstructure with domain symmetries inherited from the well symmetries in these phase transformations. It can also be viewed as a quantitative version of the Hadamard jump condition. In these situations, the scaling laws thus provide detailed information on optimal domains in the associated isoperimetric problems.

\subsection{The setting}

Let us begin by outlining the model we are studying. We consider variational, singular perturbation problems associated with martensitic phase transformations both in the geometrically nonlinear and the linearized settings. 

\subsubsection{The geometrically nonlinear model}
\label{sec:nonl_mod}

Within the geometrically nonlinear context, we investigate scaling laws for (sharp-interface) energies consisting of an elastic and a surface contribution
\begin{align}
\label{eq:nonlinear_main}
E_{\epsilon}(u,\Omega) := \int_{\Omega} \dist^2(\nabla u,\mathcal{K}) \, dx + \epsilon |D^2 u|(\Omega).
\end{align}
Here $\Omega \subset \R^2$ denotes the reference configuration, $u: \Omega \rightarrow \R^2$ models the deformation, $| \cdot|(\Omega)$ denotes the total variation norm, and the set $\K \subset \R^{2\times 2}$ corresponds to the stress-free states of the martensitic phase transformation, also called the energy wells. In the applications to martensitic transformations, the energy wells $\K$ typically are of the form 
\begin{align*}
\K := \bigcup\limits_{j=1}^{m} SO(2) U_j
\end{align*}
 with $U_j \in \R^{2\times 2}_{sym,>}$ being symmetry related (see the applications in Sections \ref{sec:appl_lin} and \ref{sec:appl_nonlin} below). From a materials science point of view, the wells $SO(2) U_j$ represent the different phases of the materials, i.e., the so-called austenite phase and/or the so-called variants of martensite. The $SO(2)$ invariance of the energy wells reflects the frame indifference of the model. We refer to \cite{BJ87,BJ92,B,M1,B04} for further background on the modelling of these materials.

\subsubsection{The geometrically linearized model}
\label{sec:lin_mod}

In parallel to our discussion of the full, geometrically nonlinear model \eqref{eq:nonlinear_main}, in this article, we also simultaneously discuss associated geometrically linearized models. While preserving the \emph{material nonlinearity}, i.e., the multi-well structure of the model \eqref{eq:nonlinear_main}, in the geometrically linearized setting, the \emph{geometric nonlinearity}, i.e., the frame invariance assumption, is linearized. This leads to $Skew(2)$ instead of $SO(2)$ invariance. As a consequence, in these settings, we study energies of the form
\begin{align}
\label{eq:linear_main}
\F(v,\Omega) := \int_{\Omega} \dist^2(e( v),\tilde{\mathcal{K}}) \, dx + \epsilon |D^2 v|(\Omega).
\end{align}
Here, as above, $\Omega \subset \R^2$ is the reference configuration, $v: \Omega \rightarrow \R^2$ denotes the (infinitesimal) displacement with the infinitesimal strain tensor $e(v):= \frac{1}{2}(\nabla v + (\nabla v)^t)$, and the set $\tilde{\mathcal{K}}$ models the stress-free states of the martensitic phase transformation and will also be referred to as the energy wells. In the geometrically linearized setting, these are typically of the form
\begin{align*}
\tilde{\mathcal{K}} = \{e^{(1)},\dots, e^{(m)}\},
\end{align*}
with $e^{(j)} \in \R^{2\times 2}_{sym}:= \{M \in \R^{2\times 2}: \ M = M^{t}\}$ for $j\in \{1,\dots,m\}$. We refer to \cite[Chapter 11]{B} and \cite{Bhat93} for comparisons of the geometrically linearized and nonlinear settings. Both in definition \eqref{eq:nonlinear_main} and \eqref{eq:linear_main} we occasionally also drop the domain dependence in cases that there is no danger of confusion on which domain the energies are considered.

\medskip

In both the settings of \eqref{eq:nonlinear_main}, \eqref{eq:linear_main}, it is our main objective, to investigate the interaction of the domain geometry and the geometry and symmetries of the set of wells $\K$ and $\tilde{\K}$, respectively. To this end, we deduce scaling laws for the minimal energy contribution in terms of the singular perturbation parameter for a certain class of wells $\K$ and $\tilde{\K}$ and quantify the domain dependence in terms of matching upper and lower scaling bounds. As one of our main results, we infer that the interaction of the domain geometry and the geometry of the wells leads to the \emph{selection of special nucleation microstructure} (c.f. Theorems \ref{thm:main_gen} and \ref{thm:main}).

\subsection{The lower bound estimates}
\label{sec:intro_lower}

We begin by discussing general \emph{lower} scaling bounds, both for the geometrically nonlinear and linear models \eqref{eq:nonlinear_main}, \eqref{eq:linear_main}. In the next sections, we will complement these results with matching \emph{upper} bound estimates for suitable choices of the sets $\K$ and $\tilde{\K}$.

As previously explained, the logaritmic loss in the lower scaling bounds below arises from an incompatibility between the wells $\K$ and the geometry of a part of the boundary of the domain $\Omega$ on which a boundary condition is imposed.
To state our result in the most general setting, we consider boundary conditions of oscillation type.
For this reason, we introduce the following notion of \emph{oscillation of traces} of a function $v\in W^{1,2}(\Omega;\R^2)$ on $\tilde \Gamma\subset \p\Omega$;
$$
\osc_{\tilde\Gamma}(v):=\esup_{x_1,x_2\in\tilde\Gamma}|v_{|\p\Omega}(x_1)-v_{|\p\Omega}(x_2)|.
$$

For the geometrically nonlinear model, our main lower bound estimate, which will be applied to specific sets $\K$ in the next subsections, reads:

\begin{thm}
\label{prop:gen_domain_nonlinear}
Let $\mathcal{K} \subseteq \R^{2\times 2}$ be compact and let $\Omega \subset \R^2$  be a compact Lipschitz domain. Let $\Gamma \subset \partial \Omega$ be relatively open and $C^3$ regular. Let $\tau: \Gamma \rightarrow \R^2$ denote  a $C^2$ regular unit tangent vector field to $\Gamma$.
 If  
\begin{align}
\label{eq:dist}
 d:=\min\limits_{x\in\bar\Gamma} \min\limits_{K \in \mathcal{K}} | K \tau(x) - \tau(x)| > 0,
 \end{align}
  then 
  for any $u \in W^{1,2}(\Omega;\R^2)$ such that   
    \begin{equation}\label{eq:bc-nonlin-gen}
  	\begin{split}
  		\osc_{\tilde\Gamma}(u-id)\le\frac{d}{208}\mathcal{H}^1(\tilde\Gamma), \text{ for every relatively open connected } \tilde\Gamma\subset\Gamma,
  	\end{split}
  \end{equation}
  
  it holds
\[
\E(u,\Omega) = \int_{\Omega} \dist^2(\nabla u,\mathcal{K}) \, dx + \epsilon |D^2u|(\Omega) \geq c \min\{ 1, \epsilon (|\log \epsilon| + 1) \}.
\] 
\end{thm}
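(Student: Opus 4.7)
I would prove the lower bound by a dyadic multiscale argument on strips parallel to $\Gamma$, in the spirit of classical Kohn--M\"uller scaling results, inductively trading off the elastic and the surface contributions while propagating the boundary oscillation control into the interior. Since $\bar\Gamma$ is compact and $C^3$-regular, cover $\Gamma$ by finitely many tubular-neighborhood charts that straighten $\Gamma$ to a segment of $\{x_2=0\}$ with $\tau=e_1$, reducing the problem to a rectangle $R=(0,L)\times(0,h_0)$. Under such a $C^3$-diffeomorphism, the energy $\E$, the total variation $|D^2u|$, the incompatibility constant $d$ in \eqref{eq:dist}, and the oscillation condition \eqref{eq:bc-nonlin-gen} are all preserved up to bounded multiplicative constants, so it suffices to prove the estimate in this flattened setting.

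\textbf{Per-scale estimate.} For dyadic scales $h_k=2^{-k}\in[\epsilon,h_0]$ and the annular strip $A_k := (0,L)\times(h_k/2,h_k)$, I would use the $BV$-slicing theorem applied to $\nabla u$ together with Fubini to select a ``good'' height $t_k\in(h_k/2,h_k)$ at which both
\begin{equation*}
\int_0^L \dist^2(\nabla u(x_1,t_k),\mathcal K)\,dx_1 \leq \tfrac{4}{h_k}\int_{A_k}\dist^2(\nabla u,\mathcal K)\,dx
\end{equation*}
and $|D(\nabla u(\cdot,t_k))|((0,L))\leq(4/h_k)|D^2u|(A_k)$. Off an exceptional set of small measure (controlled by the elastic defect), $\partial_1 u(\cdot,t_k)$ stays $d/4$-close to $\mathcal K e_1$ and, by \eqref{eq:dist}, at distance $\geq d/2$ from $e_1$. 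On a maximal subinterval $I=(a,b)$ where $\partial_1 u(\cdot,t_k)$ stays in a $d/4$-neighborhood of a \emph{single} value $K e_1$, integration along the slice gives
\begin{equation*}
\bigl|[u(b,t_k)-(b,0)] - [u(a,t_k)-(a,0)]\bigr| \geq \tfrac{d}{3}(b-a)
\end{equation*}
after absorbing the elastic error. On the other hand, the vertical trace identity $u(x_1,t_k)-u(x_1,0)=\int_0^{t_k}\partial_2 u(x_1,s)\,ds$ combined with \eqref{eq:bc-nonlin-gen} bounds the same quantity by $\tfrac{d}{208}(b-a)+\omega_k(I)$, where $\omega_k(I)$ is a transfer error controlled in $L^2$ by $h_k^{1/2}\|\partial_2 u\|_{L^2(I\times(0,t_k))}$. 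The positive gap $\tfrac{d}{3}-\tfrac{d}{208}>0$ forces either (i) at least $\sim 1/h_k$ such maximal intervals, contributing $\epsilon|D^2u|(A_k)\gtrsim\epsilon$, or (ii) a large $L^2$-defect of $\partial_2 u$ on the sub-strip below $A_k$, contributing elastic energy $\gtrsim\epsilon$. Both options yield $\E(u,S_{h_k})\geq\E(u,S_{h_{k+1}})+c\epsilon$, where $S_h := (0,L)\times(0,h)$.

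\textbf{Summation and main obstacle.} Telescoping the per-scale estimates over $k$ with $2^{-k}\in[\epsilon,h_0]$ gives $\E(u,\Omega)\gtrsim\epsilon(|\log\epsilon|+1)$, while the truncation at $1$ handles the regime $\epsilon\gtrsim 1$ via a direct Poincar\'e-type argument using \eqref{eq:dist} and \eqref{eq:bc-nonlin-gen}. The main obstacle is the propagation of the oscillation control into the interior: the naive $L^2$-bound on $\partial_2 u$ gives a transfer error $\omega_k$ that, per interval of length $h_k$, is comparable to $\sqrt{h_k\,\E}$, and this has to be tracked inductively across all $\sim|\log\epsilon|$ scales so that the dichotomy remains strict. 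The explicit constant $\tfrac{d}{208}$ in \eqref{eq:bc-nonlin-gen} is calibrated precisely to absorb the triangle-inequality losses from flattening, good-slice selection, elastic error absorption, and vertical trace transfer simultaneously throughout the induction.
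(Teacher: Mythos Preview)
The overall architecture --- straightening $\Gamma$ by a chart, working in dyadic strips at distance $h_k\sim 2^{-k}$ from $\Gamma$, extracting a good slice in each strip, and summing the per-scale gain to produce the $\epsilon|\log\epsilon|$ --- matches the paper. The gap is in the per-scale step, specifically the vertical transfer estimate and the dichotomy you build on it.

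Your option (ii) asserts that a large $\|\partial_2 u\|_{L^2(I\times(0,t_k))}$ contributes elastic energy $\gtrsim\epsilon$. This is where the argument breaks: $\|\partial_2 u\|_{L^2}$ is \emph{not} controlled by the elastic defect. For $\nabla u$ sitting exactly on $\mathcal K$ one has $\|\partial_2 u\|_{L^2}\sim c_3\,(\mathrm{vol})^{1/2}$ with $c_3:=\max_{K\in\mathcal K}|K|$ and zero elastic energy. Concretely, the normal transfer $u(x_1,t_k)-u(x_1,0)=\int_0^{t_k}\partial_2 u$ always picks up a contribution of order $c_3 h_k$ coming from the wells themselves, independent of any defect; when your maximal interval has length $|I|\sim h_k$ this term is of the same order as the gain $d|I|$ and the gap $\tfrac{d}{3}-\tfrac{d}{208}$ does not close unless $c_3\ll d$, which is not assumed. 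A secondary issue is that $\omega_k(I)$ as you define it is a pointwise difference at the two endpoints $a,b$, and the $L^2$ bound you quote over the rectangle does not control it.

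The paper handles this differently. Rather than counting maximal intervals on the full slice, it fixes a \emph{single} tangential interval $I$ of calibrated length $|I|=c_1x$ with $c_1\sim c_3/d$, selected by pigeonhole so that both the sliced energy on $\{x\}\times I$ and the strip energy on $(0,x)\times I$ are controlled. On this $I$ one runs the trichotomy (large $|\partial_\tau\nabla u|$)/(uniformly far from $\mathcal K$)/(uniformly $3c_2$-close to $\mathcal K$); only the third case is nontrivial. There the paper works with $L^1(I)$-norms (not endpoint oscillations), compares $u$ on the slice to $u$ on $\Gamma$ via the normal derivative, and explicitly splits $\partial_\nu u=K\nu+(\partial_\nu u-K\nu)$: the second piece is bounded by the elastic defect, while the first contributes a term $c_3 x|I|$ that is absorbed precisely because $|I|=c_1x$ with $c_1$ chosen large relative to $c_3/d$. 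After this absorption one is left with a clean inequality $\tfrac{c_2}{8}|I|^2\le C|I|^2\E(u;\{x\}\times(0,1))^{1/2}+Cx^{1/2}|I|\E(u)^{1/2}$, from which the slice bound $\E(u;\{x\}\times(0,1))\ge c\min\{\epsilon/x,1\}$ follows by a case split. Integrating in $x$ (their Lemma~\ref{lem: lb log}) is then your summation step.

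In short: your plan is the right shape, but the per-scale dichotomy needs to be replaced by (or repaired into) the single-interval $L^1$-Poincar\'e argument with the calibrated length $|I|\sim(c_3/d)\,h_k$ and the explicit splitting of $\partial_\nu u$ into well part plus defect.
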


Let us comment on this: Theorem \ref{prop:gen_domain_nonlinear} provides a ``generic'' lower bound scaling law for settings in which there is no special compatibility relating the domain $\Omega$, the boundary conditions on $\Gamma$ and the  structure of the wells $\K$. In a precise way this is encoded in the condition \eqref{eq:dist} together with the assumption \eqref{eq:bc-nonlin-gen}. Compared to the ``trivial'' bound which scales linearly in $\epsilon$, the logarithmic contributions quantify the mismatch between the geometry of the domain and the wells, enforcing a (weak) presence of microstructure. More precisely, we highlight that the condition \eqref{eq:dist} can be interpreted as a failure of the Hadamard-jump condition, with Theorem \ref{prop:gen_domain_nonlinear} encoding this quantitatively. 
This is most transparent, if $u|_{\overline{\Gamma}} = id|_{\overline{\Gamma}}$, i.e., if austenite boundary conditions are prescribed.
Indeed, in this case by the Hadamard jump condition, a flat austenite-martensite interface with normal $\tau^{\perp} \in S^1$ between the identity deformation and a variant of martensite given by one of the deformation matrices $K \in \K$ would correspond to the condition that
\begin{align}
\label{eq:Hadamard_nonlinear}
(K-Id) \tau = (a \otimes \tau^{\perp}) \tau = 0 \mbox{ for some } a \in \R^2.
\end{align}
Here, for a vector $\nu \in \R^2$ we write $\nu^{\perp}$ for its clockwise rotation by $90^{\circ}$ and define $a\otimes \tau^{\perp}:= a (\tau^{\perp})^t$ which is understood in the sense of matrix multiplication.
For any fixed boundary point $x\in \Gamma$, the assumption \eqref{eq:dist} thus rules out that for any $K \in \K$ the property \eqref{eq:Hadamard_nonlinear} holds. In this sense, it enforces incompatibility between the wells and the boundary data and thus leads to the presence of microstructure. In case that the boundary conditions are not perfect austenite conditions, the assumption \eqref{eq:bc-nonlin-gen} should then be read as a quantification of how much the boundary conditions deviate from the austenite. The interaction of the smallness condition \eqref{eq:bc-nonlin-gen} and the well compatibility condition \eqref{eq:dist} then imply that the Hadamard jump condition between the boundary data and the energy wells is still violated. We remark that for technical reasons, for general domains with non-flat boundaries, we rely on the condition \eqref{eq:bc-nonlin-gen} even in the case of austenite data (see the proof in Section \ref{sec:proof_lower}). If stronger regularity assumptions on $u$ were available, this would not be necessary. From a technical point of view, our results are related to and inspired by the works \cite{GZ23, GZ23a} on low energy microstructures. In the present work, we extend these ideas to settings involving gauge invariances ($SO(2)$ and $\text{Skew}(2)$), fully vectorial models as well as to rather general geometries.

We further observe that, in the typical case of nonlinear elasticity in which $\K=\bigcup_{j=1}^m SO(2)U_j$ with $U_j\in\R^{2\times2}_{sym,>}$, condition \eqref{eq:dist} can be reduced to the following bound
$$
d:=\min_{x\in\bar{\Gamma}}\min_{j\in\{1,\dots,m\}}\big||U_j\tau(x)|-1\big|>0.
$$
Indeed, due to rotation invariance, in this case, a failure of \eqref{eq:Hadamard_nonlinear} is possible only if $U_j$ are non-identical deformations in direction $\tau$. Vice versa, as encoded in the Hadamard jump condition, compatible microstructure without the logarithmic losses necessitates tangential continuity. The condition \eqref{eq:dist} hence quantifies the violation of the tangential continuity property.

We note that for bounded $C^3$ regular domains, by the hairy ball theorem, the condition \eqref{eq:dist} is always satisfied on a (sufficiently small) portion of $\partial \Omega$ if $\K$ is a discrete set. Such $C^3$ austenite-martensite interfaces are thus closely connected to \emph{non-classical austentite-martensite interfaces} as studied in \cite{BC97, BC99, BKS09, BJK14}.
Similarly, in the class of piecewise polygonal domains, i.e., for domains with boundaries which consist of a union of finitely many planes, if $\K$ is finite, generically the condition \eqref{eq:dist} holds.
Thus, it is only for very special choices of piecewise polygonal domains and wells, that it is possible to improve on these ``generic'' $\epsilon |\log(\epsilon)|$ bounds and obtain optimal linear scaling laws. We remark that for physical reasons (as the surface energies are believed to often carry only small prefactors), the settings for small choices of the parameter $\epsilon>0$ are of particular physical interest.

In the subsequent sections, we will complement the general lower bound of Theorem \ref{prop:gen_domain_nonlinear} with the observation that in the very non-generic setting that the domain $\Omega$, the boundary conditions on $\Gamma$ and the  structure of the wells $\K$ are in a precise way related to each other, it is possible to improve on these bounds and to prove an optimal linear scaling law. We interpret this as a selection mechanism for nucleation microstructures.

The situation in the geometrically linearized setting is fully parallel to this. Also in this setting, unless there is a very special relation between the domain geometry, the boundary conditions and the structure of the wells, one obtains a lower scaling law with logarithmic losses, which in general cannot be avoided.

\begin{thm}
\label{prop:gen_domain_lin}
Let $\tilde{\mathcal{K}} \subseteq \R^{2\times 2}_{sym}$ be compact and let $\Omega \subset \R^2$ be a bounded Lipschitz domain. Let $\Gamma \subset \partial \Omega$ be relatively open and $C^3$ regular.  Let $\tau: \Gamma \rightarrow \R^2$ denote a $C^2$ regular unit tangent vector field to $\Gamma$.
 If  
\begin{align}
\label{eq:dist-lin}
 d:=\min\limits_{x\in\bar\Gamma} \min\limits_{K \in \tilde{\mathcal{K}}} | \tau(x)\cdot K \tau(x) | > 0,
 \end{align}
  then  
  for any $v \in W^{1,2}(\Omega;\R^2)$ such that
    \begin{equation}\label{eq:bc-lin-gen}
  	\begin{split}
  		\osc_{\tilde\Gamma}(v) \le \frac{d}{64\cdot17\cdot9\cdot\sqrt{2}}\mathcal{H}^1(\tilde\Gamma), \text{ for every relatively open connected } \tilde\Gamma\subset\Gamma,
  	\end{split}
  \end{equation}
    
  it holds
\[
\F(v,\Omega) = \int_{\Omega} \dist^2(e( v), \tilde{\mathcal{K}}) \, dx + \epsilon |D^2 v|(\Omega) \geq c \min\{ 1, \epsilon (|\log \epsilon| + 1) \}.
\] 
\end{thm}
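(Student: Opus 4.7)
The proof runs in parallel to that of Theorem \ref{prop:gen_domain_nonlinear}, with the geometrically nonlinear Hadamard condition $(K-\Id)\tau = 0$ replaced by its linearization. Indeed, a flat austenite-martensite interface with normal $\tau^\perp$ across which the strain jumps from $0$ to a symmetric matrix $K$ requires $K = \frac{1}{2}(a \otimes \tau^\perp + \tau^\perp \otimes a)$ for some $a \in \R^2$, and hence in particular $\tau \cdot K \tau = 0$. Hypothesis \eqref{eq:dist-lin} thus quantifies the uniform violation of this linearized Hadamard condition along $\Gamma$, while the oscillation bound \eqref{eq:bc-lin-gen} guarantees that the prescribed data does not introduce a spurious compatibility direction.

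The plan is as follows. First, using the $C^3$-regularity of $\Gamma$, I would straighten the boundary via the signed-distance parameterization: for $t_0>0$ small, the map $(s,t) \mapsto \gamma(s) - t\nu(\gamma(s))$ is a $C^2$-diffeomorphism from $[0,L] \times [0,t_0]$ onto a tubular neighborhood of $\Gamma$, and the slices $\Gamma_t$ are curves of length close to $L$, with tangent $\tau_t$ agreeing with $\tau$ up to $O(t)$ curvature corrections. On each slice one then has the key identity
\[
\partial_\tau (v \cdot \tau) = \tau \cdot e(v) \tau + (\text{curvature term})
\]
(exploiting $\tau \cdot \omega \tau = 0$ for any antisymmetric $\omega$), and, for $t_0$ small, the curvature correction is absorbed below $d/4$. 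Combined with \eqref{eq:dist-lin} this yields the pointwise dichotomy: either $\dist(e(v),\tilde{\K}) \geq d/4$, or $|\tau \cdot e(v) \tau| \geq d/2$. By Fubini, for most $t \in (0,t_0)$ the measure of the ``bad'' set on $\Gamma_t$ is controlled by the elastic energy.

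Second, I would argue by contradiction, supposing $\F(v,\Omega) \leq c\, \epsilon |\log\epsilon|$ with $c$ to be fixed. Using dyadic scales $t_k = 2^{-k} t_0$ down to $t_k \sim \epsilon$, on each scale I would partition the slice $\Gamma_{t_k}$ into sub-arcs of length $\sim t_k$. For each sub-arc the boundary condition \eqref{eq:bc-lin-gen} bounds the oscillation of $v$ along the underlying boundary sub-arc by a fraction of $d t_k$; propagation to $\Gamma_{t_k}$ via $v(y,t_k) - v(y,0) = \int_0^{t_k} \partial_\nu v(y,s)\, ds$ and integration of the mixed second derivative yields
\[
\osc_{\text{sub-arc of }\Gamma_{t_k}}(v) \leq \tfrac{d t_k}{C_0} + |D^2 v|(R_{k,i}),
\]
where $R_{k,i}$ is the rectangle of side $t_k$ sitting over the sub-arc. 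On sub-arcs with small $|D^2 v|$-mass, the good-set lower bound $|\partial_\tau(v\cdot\tau)| \geq d/2$ forces each monotone piece of $v\cdot\tau$ to have length bounded by a multiple of $t_k$, so at least a fixed number of sign changes of $\tau \cdot e(v)\tau$ must occur; each such sign change contributes $\geq d$ to the tangential total variation of $\nabla v$ on $\Gamma_{t_k}$. Counting sub-arcs whose rectangles have small $|D^2v|$-mass (the remaining ones contributing at most $|D^2 v|(\Omega) \leq \F/\epsilon$ summed across all scales) produces, on each of the $\gtrsim |\log\epsilon|$ dyadic scales, a contribution of order $dL$ to $|D^2 v|(\Omega)$. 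This yields $|D^2 v|(\Omega) \gtrsim dL |\log\epsilon|$, hence $\F \geq \epsilon |D^2 v|(\Omega) \gtrsim \epsilon |\log\epsilon|$, contradicting the standing assumption for $c$ sufficiently small.

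The main technical obstacle will be the propagation of the boundary oscillation to interior slices while retaining the logarithmic counting, requiring a careful selection of sub-arcs and scales and a precise accounting of the $|D^2 v|$-mass across non-overlapping rectangles so that the dyadic sum telescopes properly. Secondary obstacles are (i) controlling the curvature corrections from the non-straightness of $\Gamma$ uniformly in the slicing, for which the $C^3$-regularity of $\Gamma$ (equivalently, $C^2$-regularity of $\tau$) is essential, and (ii) tracking explicit constants so as to match the smallness threshold in \eqref{eq:bc-lin-gen}, a cumbersome but routine bookkeeping reflected in the factor $\frac{1}{64\cdot 17\cdot 9\cdot \sqrt{2}}$.
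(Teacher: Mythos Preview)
Your proposal has the right overall architecture---straighten the boundary, exploit the identity $\tau\cdot e(v)\tau = \partial_\tau(v\cdot\tau)$ (up to curvature), and run a dyadic counting argument---but the way you propagate boundary information inward and then convert it into a lower bound on $|D^2 v|(\Omega)$ contains a genuine gap.

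\textbf{The propagation step.} You control the oscillation of $v$ on an interior sub-arc of $\Gamma_{t_k}$ via the mixed second derivative, obtaining $\osc \le \frac{d t_k}{C_0} + |D^2 v|(R_{k,i})$ with rectangles $R_{k,i}$ that reach from the boundary $\Gamma$ all the way to $\Gamma_{t_k}$. These columns necessarily \emph{overlap across scales} (they are all based on $\Gamma$), so your ``non-overlapping rectangles'' bookkeeping cannot be carried out as stated. If you try instead to stack rectangles in annuli $(\Gamma_{t_{k+1}},\Gamma_{t_k})$, you must propagate the oscillation bound inductively from one slice to the next, and the accumulated $|D^2v|$-mass over $\sim|\log\epsilon|$ scales is exactly $|D^2 v|(\Omega)\le c|\log\epsilon|$, which swamps the target $d t_k$ at every scale. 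Either way, the number of ``good'' sub-arcs cannot be shown to be of order $L/t_k$ under the standing assumption $F_\epsilon(v)\le c\epsilon|\log\epsilon|$.

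\textbf{The 1D$\to$2D step.} Even granting good sub-arcs, the sign changes you produce live on the single curve $\Gamma_{t_k}$ and contribute only to the \emph{one-dimensional} sliced variation $|\partial_\tau\nabla v(\cdot,t_k)|$. This does not yield ``a contribution of order $dL$ to $|D^2 v|(\Omega)$'': a single slice has two-dimensional measure zero. To get a 2D lower bound you would need the sliced estimate for almost every $t$ in an annulus and then integrate (which is precisely the mechanism of Lemma~\ref{lem: lb log}); your argument as written establishes it only at the discrete distances $t_k$.

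\textbf{How the paper closes these gaps.} The paper avoids both issues by propagating via the \emph{elastic energy along the diagonals} $\xi=(1,\pm1)$, using that $\xi\cdot\partial_\xi v = \xi\cdot e(v)\xi$ is controlled by the bulk term. This gives an $L^1$ bound $\|v_2(x,\cdot)-v_2(0,s_0)\|_{L^1(I)}\lesssim x^{1/2}|I|\,F_\epsilon(v)^{1/2}+c_6 x|I|$ on the \emph{generic} slice, and since the bound is in terms of the integrable elastic energy (rather than the non-splittable surface mass) the per-slice lower bound $F_\epsilon(v;\{x\}\times(0,1))\ge c\min\{\epsilon/x,1\}$ follows for a.e.~$x$ and can be integrated. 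In the curved setting the diagonal directions pick up $(\nabla F - R)$-errors multiplying the \emph{full} gradient $\nabla\tilde v$; this is why the paper invokes the Korn inequality with boundary conditions (Proposition~\ref{prop: Korn}) on each parallelogram $S_i$ to control $\|\nabla\tilde v\|_{L^2(S_i)}$ in terms of the elastic energy and the boundary oscillation. Your approach bypasses Korn, which is attractive, but the price---propagating by surface energy---is exactly what breaks the dyadic accounting.

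A secondary issue: your ``curvature term'' in $\partial_\tau(v\cdot\tau)=\tau\cdot e(v)\tau + (\text{curvature})$, taken along an arclength parameterization, is $\kappa\, v\cdot\nu$, which involves $v$ itself and is not a priori small (only $\osc\, v$ is controlled, not $|v|$). This can be handled by subtracting a constant from $v$, but you should note that the constant must be chosen consistently across the slice on which you later count sign changes.
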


We emphasize that the assumptions \eqref{eq:dist-lin}, \eqref{eq:bc-lin-gen} are direct geometrically linear analogues of \eqref{eq:dist}, \eqref{eq:bc-nonlin-gen}.
Indeed, by the Hadamard jump condition the existence of a flat austenite-martensite interface between the zero displacement strain and a strain $K \in \tilde{\K}$ with normal direction $\tau^{\perp} \in S^1$ is equivalent to the condition that 
\begin{align*}
\tau \cdot K \tau = (\tau \cdot a\odot \tau^{\perp}) \tau = 0 , \mbox{ for some } a \in \R^2.
\end{align*} 
Here we have used the notation $a\odot \tau^{\perp}:= \frac{1}{2}(a\otimes \tau^{\perp} + \tau^{\perp} \otimes a)$.
For $x\in  \Gamma$ fixed, condition \eqref{eq:dist-lin} encodes the absence of such a flat austenite-martensite interface between the zero strain and any of the matrices in the set $\tilde{\K}$ with any of the normals $\tau(x)^{\perp}$. As above, this hence enforces the presence of microstructure and eventually leads to the logarithmic losses analogously as in the nonlinear setting of Theorem \ref{prop:gen_domain_nonlinear}.

\subsection{A geometry- and symmetry-induced selection of microstructure: The geometrically linear hexagonal-to-rhombic phase transformation}
\label{sec:appl_lin}
In this subsection, it is our objective to show how Theorem \ref{prop:gen_domain_lin} gives rise to a dichotomy in the scaling law for certain phase transformations allowing for highly symmetric microstructure. To this end, we will focus on a special case of the geometrically linearized two-dimensional $n$-well differential inclusions investigated in \cite{CDPRZZ20} (and also in \cite{RZZ16} and references therein). We focus on the case of three martensitic wells as this corresponds to the experimentally interesting case of the hexagonal-to-rhombic phase transformation.
In this case (for a fixed temperature below the transformation temperature) we set
\begin{align}
\label{eq:Kn_lin}
\tilde{\K}_3:=\{e^{(1)}, e^{(2)}, e^{(3)} \},
\end{align}
with
\begin{align*}
e^{(1)} =  \frac{1}{2}\begin{pmatrix}
1 & -\sqrt{3} \\ -\sqrt{3} & -1 \end{pmatrix}, \ 
\ e^{(2)} = 
\begin{pmatrix} -1 & 0 \\ 0 & 1 \end{pmatrix},\
e^{(3)} = \frac{1}{2}\begin{pmatrix} 1 & \sqrt{3} \\ \sqrt{3} & -1\end{pmatrix}.
\end{align*}
We note that the matrices $e^{(j)}$ are obtained from the matrix $e^{(1)}$ by the action of an element in the point group of the hexagonal lattice, i.e., $e^{(j)} = Q_j e^{(1)} Q_j^t$ for $j\in \{1,2,3\}$ and 
\begin{align}
\label{eq:rot}
Q_j := Q(\frac{2\pi}{3}(j-1)), \ 
Q(\varphi) := \begin{pmatrix} \cos(\varphi) & - \sin(\varphi) \\ \sin(\varphi) & \cos(\varphi) \end{pmatrix}.
\end{align}

For this phase transformation, the exactly stress-free states at temperatures below the transformation temperature are described by the differential inclusion
\begin{align*}
e(v) \in \tilde{\mathcal{K}}_3 \mbox{ a.e. in } \Omega,
\end{align*}
where $v: \Omega \rightarrow \R^2$ denotes the displacement.

We recall the following facts, which give evidence of the observation that the microstructures forming in the hexagonal-to-rhombic phase transformation are quite flexible and can be concatenated to obtain rather complex structures:
\begin{itemize}
\item All pairs $e^{(i)}, e^{(j)}$ with $i \neq j$ are pairwise symmetrized rank-one connected, i.e., there exist $n_{ij} \in S^1, \ a_{ij} \in \R^2 \setminus \{0\}$ such that
\begin{align*}
e^{(i)}- e^{(j)} = a_{ij} \odot n_{ij}.
\end{align*}
In particular, it is possible to form simple laminate solutions.
\item There exist (more complicated) exactly stress-free microstructures involving corners of higher order. More precisely there are:
\begin{itemize}
\item a single configuration involving a twelve-fold corner,
\item (up to symmetry) two types of six-fold corners,
\item (up to symmetry) one type of four-fold corner.
\end{itemize}
We refer to Figure 24 in \cite{RZZ16} for an illustration of these building block microstructures.
\item For austenite boundary conditions there are ``star-type'' microstructures in very specific domains which are given as certain rotated equilateral triangles (see Figure 25, 26 in \cite{RZZ16} as well as the (geometrically linearized) constructions in \cite{CDPRZZ20}). We emphasize that nonlinear analogues of these can be found in \cite{CKZ17} and \cite{CDPRZZ20} and have been observed experimentally \cite{KK91}. A schematic illustration can also be found in Figure \ref{fig:star}.
\end{itemize}

We remark that the hexagonal-to-rhombic phase transformation is also of relevance experimentally as a two-dimensional reduction of the three-dimensional transformation occurring in materials such as Mg$_2$Al$_4$Si$_5$O$_{18}$, Mg-Cd or Pb$_3$(VO$_4$)$_2$ alloys \cite{KK91,CPL14,MA80,MA80a}.

\begin{figure}
\includegraphics[width = 3 cm]{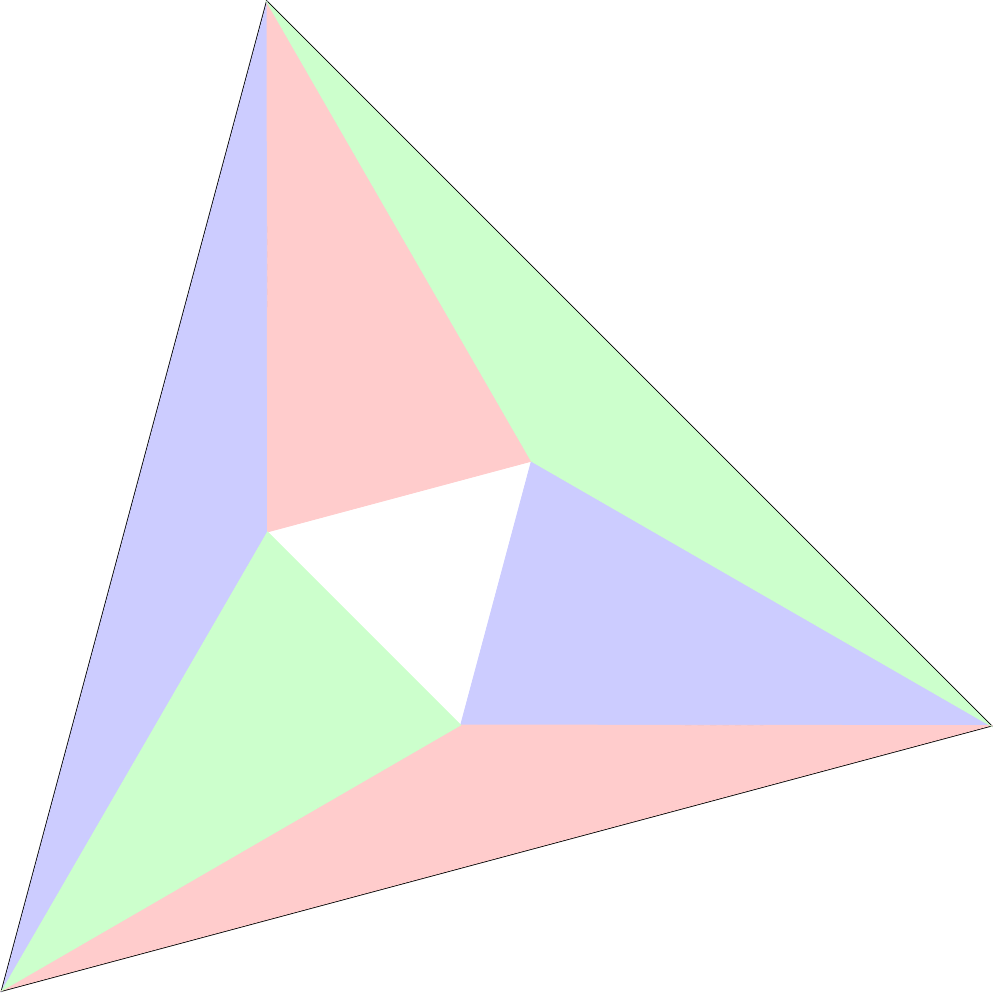}
\hspace{2cm}
\includegraphics[width = 3cm]{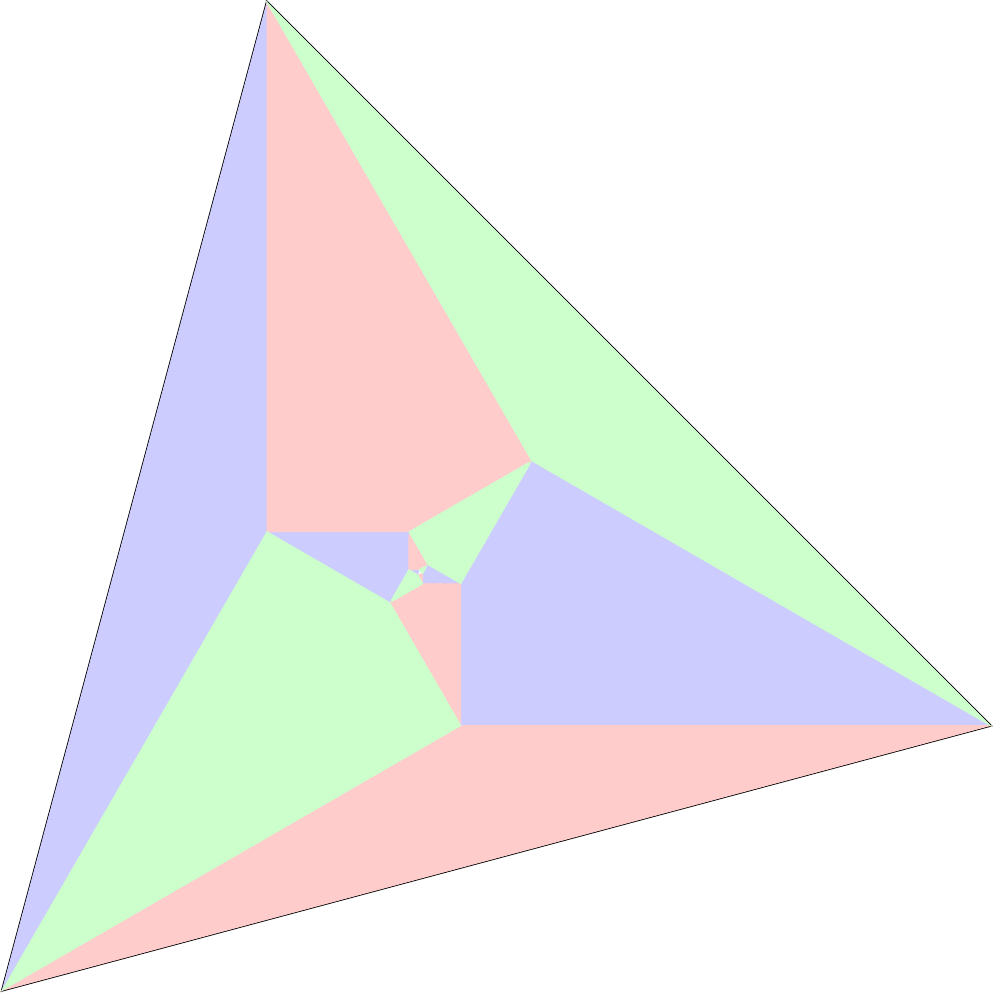}
\caption{The self-similar, star-type low energy microstructure in the hexagonal-to-rhombic phase transformation. The figure corresponds to the reference domain, the colours to the variants of martensite $e^{(1)}, e^{(2)}, e^{(3)}$ by which the corresponding domains are displaced. On the left panel, only the outer ring of the displacement is depicted. Here both on the outer and inner boundary condition austenite data are prescribed. The right panel shows the geometrically refining, self-similar construction of the overall microstructure with austenite prescribed on the outer boundary of the triangle and the interior now being fully covered by a martensitic microstructure.}
\label{fig:star}
\end{figure}

We note that all the matrices in $\tilde{\K}_3$ are symmetrized rank-one connected to the austenite and that these normals can be explicitly computed:

\begin{lem}[Austenite-martensite normals]
\label{lem:normals}
Let $\tilde{\K}$ be as in \eqref{eq:Kn_lin} and let $Q_j \in SO(2)$ be as in \eqref{eq:rot}. Then all normals associated with flat austenite-martensite interfaces are given (up to normalization and orientation reversal) by the set
\begin{align}
\label{eq:normals}
\mathcal{\tilde{N}}:=\{ Q_j (e_1+e_2), Q_j (e_1-e_2); j \in \{1,2,3\}\}.
\end{align}
\end{lem}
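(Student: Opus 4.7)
The plan is to identify austenite–martensite normals with the normals of symmetrized rank-one decompositions of each $e^{(j)}$, to compute them directly for the diagonal matrix $e^{(2)}$, and then to exploit the hexagonal symmetry $e^{(j)}=Q_je^{(1)}Q_j^t$ to transfer the result to $e^{(1)}$ and $e^{(3)}$.

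First, I would recall that a flat austenite–martensite interface between the zero strain and $e^{(j)}$ with unit normal $n\in S^1$ exists if and only if $e^{(j)}=a\odot n$ for some $a\in\R^2$, and then establish the elementary $2\times 2$ fact that, for a symmetric matrix $M$ with $\det M<0$, such a decomposition is solvable if and only if $n^\perp\cdot M n^\perp=0$ (one direction is immediate from $w\cdot(a\odot n)w=(w\cdot a)(w\cdot n)$, which vanishes for $w\perp n$; the converse follows by writing $M$ in the orthonormal basis $\{n,n^\perp\}$ and solving explicitly for $a$ in the form $a=\alpha n+\beta n^\perp$). Since each $e^{(j)}$ has $\det e^{(j)}=-1$, i.e.\ signature $(1,1)$, the null cone of the quadratic form $x\mapsto x\cdot e^{(j)}x$ consists of exactly two lines through the origin, giving exactly two normal directions per well up to sign and scaling.

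Next, for $e^{(2)}=\diag(-1,1)$ the null condition $-w_1^2+w_2^2=0$ immediately yields $w\in\spa(e_1+e_2)\cup\spa(e_1-e_2)$, hence the normals of $e^{(2)}$ are $e_1\pm e_2=Q_1(e_1\pm e_2)$. For the remaining wells I would use the rotation covariance $Q(a\odot n)Q^t=(Qa)\odot(Qn)$ for $Q\in SO(2)$: combined with $e^{(j)}=Q_je^{(1)}Q_j^t$ (which also entails the cyclic relation $e^{(j+1)}=Q_2e^{(j)}Q_2^t$, since $Q_{j+1}=Q_2Q_j$), this shows that passing from $e^{(j)}$ to $e^{(j+1)}$ rotates the normals by $Q_2$. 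Applying $Q_2^t=Q_3$ to the normals of $e^{(2)}$ yields the normals $Q_3(e_1\pm e_2)$ of $e^{(1)}$, and applying $Q_2$ yields the normals $Q_2(e_1\pm e_2)$ of $e^{(3)}$. Taking the union over $j\in\{1,2,3\}$ recovers exactly $\mathcal{\tilde{N}}$.

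No step here is genuinely hard; the only point requiring mild care is the characterization of admissible normals as null directions of the quadratic form $x\cdot e^{(j)}x$, since this is what guarantees that the two explicit families produced for each well are exhaustive rather than merely a subset of the admissible normals.
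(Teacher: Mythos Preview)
Your proposal is correct and follows essentially the same route as the paper: both reduce the austenite--martensite compatibility to the null-cone condition $\tau\cdot e^{(j)}\tau=0$, solve it explicitly for the diagonal well $e^{(2)}=\diag(-1,1)$ to obtain $e_1\pm e_2$, and then transport to the other wells via the rotational symmetry $e^{(j)}=Q_j e^{(1)}Q_j^t$. Your version is a bit more explicit in justifying the equivalence between $e^{(j)}=a\odot n$ and $n^\perp\cdot e^{(j)}n^\perp=0$ and in tracking the rotation indices; note that your passage from the null directions $w=n^\perp$ to the normals $n$ tacitly uses that $\{e_1+e_2,e_1-e_2\}$ is invariant under taking perpendiculars, which is the same implicit step the paper makes.
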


As a consequence, it is possible to form two types of laminates between each variant of martensite and austenite using one of these normals.

In the outlined setting of the set $\tilde{\K}_3$ from \eqref{eq:Kn_lin} for general $n\in \mathcal{\tilde{N}}$, we deduce a scaling law, which takes into account the geometry of the underlying domain in the following dichotomy.

\begin{thm}
\label{thm:main_gen}
Let $\Omega \subset \R^2$ be an open, bounded Lipschitz domain and let $\epsilon>0$.  Let $\tilde{\mathcal{K}} = \tilde{\K}_3$ and let $\F(v; \Omega)$ be as in \eqref{eq:linear_main}.
Then the following holds:
\begin{itemize}
\item[(a)] Let $\Omega$ be a bounded, piecewise $C^3$-regular domain such that there exists a relatively open subset $\Gamma\subset\p\Omega$ for which $\nu(x)\not\in\mathcal{\tilde{N}}$ for every regular point $x\in\Gamma$.
Then it holds
\begin{align*}
 \inf\limits_{v \in W^{1,2}(\Omega;\R^2) \text{ with } v_{|\Gamma} = 0} \F(v;\Omega) \sim_{\Omega,\Gamma}  \min \{ 1, \epsilon (|\log \epsilon| + 1) \}.
\end{align*} 
\item[(b)] For any normal $n\in \mathcal{\tilde{N}}$ there exists a bounded polygonal domain with one interface having $n$ as a normal and such that 
\begin{align*}
 \inf\limits_{v \in W^{1,2}(\Omega;\R^2) \text{ with } v_{|\partial \Omega} = 0}  \F(v;\Omega) \sim_{\Omega} \min\{\epsilon,1\}.
\end{align*}
\end{itemize}
\end{thm}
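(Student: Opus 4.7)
My plan is to handle the two parts separately, and within each, to treat upper and lower bounds.

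\textbf{Lower bound in (a).} I reduce to Theorem \ref{prop:gen_domain_lin}. Writing any $K \in \tilde{\mathcal K}_3$ as a symmetrized rank-one matrix $a \odot n$ (Lemma \ref{lem:normals}) yields $\tau \cdot K \tau = (\tau \cdot a)(\tau \cdot n)$, so $\tau \cdot K \tau = 0$ for some $K \in \tilde{\mathcal K}_3$ exactly when $\tau^\perp$ is parallel to an element of $\mathcal{\tilde N}$. I then choose a small $C^3$ subarc $\Gamma' \Subset \Gamma$ consisting of regular points; by hypothesis $\nu(x) \notin \mathcal{\tilde N}$ there, so continuity of the tangent and compactness of $\overline{\Gamma'}$ produce a uniform gap $d > 0$ as in \eqref{eq:dist-lin}. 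Since $v|_\Gamma = 0$ trivially verifies \eqref{eq:bc-lin-gen} on any connected relatively open $\tilde \Gamma \subset \Gamma'$, Theorem \ref{prop:gen_domain_lin} delivers the claimed $\min\{1,\epsilon(|\log \epsilon|+1)\}$ lower bound.

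\textbf{Upper bounds.} For $\epsilon \gtrsim 1$ the competitor $v \equiv 0$ gives $\F(v;\Omega) = \int_\Omega \dist^2(0, \tilde{\mathcal K}_3)\,dx \leq C$ in both (a) and (b). For the nontrivial upper bound in (a), I deploy a Kohn--M\"uller type self-similar branching construction in a thin strip along a regular subarc of $\Gamma$: fix a well $e^{(j)}$ compatible with some $n \in \mathcal{\tilde N}$, set $v$ to the corresponding affine martensitic map in the bulk, and interpolate down to $v = 0$ on $\Gamma$ by geometrically refining an austenite/martensite laminate through $N \sim |\log \epsilon|$ generations. Optimizing in $N$ balances the $\sim \epsilon N$ surface energy against the residual elastic cost and yields $\F \leq C \epsilon |\log \epsilon|$. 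For (b), by the rotational symmetry \eqref{eq:rot} I reduce to a canonical $n$ and pick $\Omega$ a rotated equilateral triangle whose three edge normals all lie in $\mathcal{\tilde N}$. The star-type construction of \cite{CDPRZZ20} (cf.\ Figure \ref{fig:star}) is a piecewise affine $v$ with $e(v) \in \tilde{\mathcal K}_3$ almost everywhere and $v|_{\partial\Omega} = 0$; hence it has zero elastic cost and $|D^2 v|(\Omega) < \infty$ depending only on $\Omega$, yielding $\F(v;\Omega) \leq C\epsilon$.

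\textbf{Lower bound in (b).} For $\epsilon$ bounded below, $\F(v \equiv 0) \sim 1$ and positivity already suffice. For $\epsilon \to 0$, I argue by compactness and rigidity: if $v_k$ with $v_k|_{\partial\Omega} = 0$ satisfied $\F(v_k;\Omega)/\epsilon_k \to 0$, then both $\int_\Omega \dist^2(e(v_k), \tilde{\mathcal K}_3)\,dx \to 0$ and $|D^2 v_k|(\Omega) \to 0$. BV-type compactness combined with Korn's inequality extracts a limit $v_\infty \in W^{1,2}(\Omega;\R^2)$ with $v_\infty|_{\partial\Omega} = 0$ and $e(v_\infty)$ constant, lying in $\tilde{\mathcal K}_3$. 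Thus $v_\infty$ is affine, the zero trace forces $v_\infty \equiv 0$, and hence $e(v_\infty) = 0 \notin \tilde{\mathcal K}_3$, a contradiction.

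\textbf{Main obstacle.} I expect the hardest step to be the Part (a) upper bound: extending the classical scalar Kohn--M\"uller branching to the vectorial, gauge-invariant multi-well setting while meeting $v = 0$ along a \emph{curved} $C^3$ boundary arc. This requires a careful choice of companion wells and interface normals at each refinement level (guided by Lemma \ref{lem:normals}) and quantitative control of the trapezoidal building blocks so that the $N \sim |\log \epsilon|$ generations align with the curvature of $\Gamma$ while preserving compatibility and the $\epsilon |\log \epsilon|$ balance.
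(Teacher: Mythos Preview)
Your lower bound in (a) and your upper bound in (b) are essentially the paper's arguments. Your lower bound in (b) differs from the paper but is valid: the paper gives a direct quantitative estimate, observing that $v|_{\partial\Omega}=0$ forces $\langle\nabla v\rangle_\Omega=0$, so Poincar\'e's inequality yields $|D^2 v|(\Omega)\ge C_\Omega\|\nabla v\|_{L^1}$, and then the pointwise triangle inequality $|\nabla v|+\dist(\nabla v,\tilde\K+\mathrm{Skew}(2))\ge \dist(0,\tilde\K+\mathrm{Skew}(2))>0$ gives the linear lower bound in one stroke. Your compactness/contradiction route reaches the same conclusion; the paper's argument is shorter and gives an explicit constant.

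The genuine gap is in your upper bound for (a). A Kohn--M\"uller branching in a thin strip between a single-well affine map and austenite does \emph{not} produce $\epsilon|\log\epsilon|$; the transition layers between successive generations of a two-phase laminate are not stress-free, and summing their elastic cost yields the familiar $\epsilon^{2/3}$ scaling. To obtain surface energy $\sim\epsilon N$ with elastic remainder $\sim 2^{-N}$ you need every generation to be \emph{exactly} stress-free with zero boundary trace, and a simple austenite/martensite laminate with a normal in $\tilde{\mathcal N}$ cannot do this along a boundary arc whose normal is not in $\tilde{\mathcal N}$. The paper's mechanism is different: it reuses the equilateral-triangle star construction from part (b) as a \emph{building block} (zero elastic energy, $v=0$ on all three sides, finite $|D^2 v|$) and performs a greedy dyadic covering of the \emph{whole} of $\Omega$, not just a strip, by lattice triangles of side $2^{-\ell}$, $\ell=0,\dots,m$. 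By Lipschitz regularity one needs $O(2^{\ell})$ triangles at scale $2^{-\ell}$, so the total surface energy is $\sim\epsilon m$, and the uncovered boundary layer of width $\sim 2^{-m}$ (where one sets $v=0$) contributes elastic energy $\sim 2^{-m}$; optimising $m\sim|\log\epsilon|$ gives the bound. Your ``main obstacle'' is therefore misidentified: the issue is not bending Kohn--M\"uller around a curved $\Gamma$, but recognising that the zero-trace, zero-elastic-energy triangles from (b) are the right unit cell for a whole-domain covering.
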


Let us comment on the dichotomy formulated in the theorem.
As shown in \cite{RZZ16} and in \cite{CDPRZZ20} for the setting from \eqref{eq:Kn_lin}, for a set of highly symmetric, polygonal domains $\Omega$ it is possible to obtain $BV$ regular solutions to the differential inclusion $e(v)\in \tilde{\K}_3 \text{ a.e. in } \Omega$. As a consequence, this implies that for these specific polygonal domains, a scaling law which is linear in the singular perturbation parameter is valid. This is the content of Theorem \ref{thm:main_gen}(b). 
As one of our main results, we prove that this linear scaling behaviour is very non-generic. More precisely, Theorem \ref{thm:main_gen}(a) asserts that for arbitrary piecewise $C^3$ domains (with tangent to the boundary in some regular point that does not match a flat interface with one of the normals $n \in \tilde{\mathcal{N}}$ allowing for an austenite-martensite twin, one immediately obtains a scaling law which deviates from the linear scaling by a logarithmic factor. Here the lower bound follows from the general result of Theorem \ref{prop:gen_domain_nonlinear} while the upper bound follows from a construction which refines dyadically towards the boundary, similarly as in \cite[Section 7c]{B3}, see also Figure \ref{fig:covering} below. The resulting austenite-martensite interfaces are thus \emph{non-classical} in (a geometrically linearized variant of) the sense of \cite{BC97, BC99}.
Hence, our result provides a rigorous foundation for the expectation that for the models under consideration nucleation of martensite in austenite is accompanied with the formation of nuclei of a very particular geometry which is closely related to the geometry of the martensitic wells.

\subsection{A geometry- and symmetry-induced selection of microstructure: The geometrically nonlinear hexagonal-to-oblique and square-to-oblique transformations}
\label{sec:appl_nonlin}

We next consider the geometrically nonlinear setting and prove a similar result for certain one-parameter families of geometrically nonlinear two-dimensional deformations, building on the structures from \cite{CKZ17} and \cite{CDPRZZ20}. For the ease of the presentation, we focus here on two particular examples of one-parameter families of transformations, the square-to-oblique transformation and the hexagonal-to-oblique transformation studied in \cite{CKZ17}, although the presented results hold for a larger class of transformations, see Remark \ref{rmk:gen_nonlin}. 

Let us describe the associated families of deformations. These are determined by the symmetries of the transformation. For the square-to-oblique case a square is mapped to a parallelogram in the passage from the austenite to the martensite phase.  
Due to the symmetries of the point group, we are lead to study the following set of stress-free matrices 
\begin{align}
\label{eq:K4}
\K_4:= \left\{Q_4(a) U_1,\dots, Q_4(a) U_4 \right\}, \ \mbox{ with } U_1 = \begin{pmatrix} a & a^{-1}-a \\ 0 & a^{-1} \end{pmatrix},
\end{align}
with $a >0$ and with $Q_4(a) \in SO(2)$ being an explicit rotation matrix which depends on $a$ (see for instance \cite[Remark 2.3]{CDPRZZ20} where $Q_4(a) = R^t$).
The matrices $U_j:= P_j U_1 P_j^t$ are here obtained by conjugation of $U_1$ with the matrices 
\begin{align*}
P_j \in \mathcal{P}_c:=\left\{ Id, \begin{pmatrix}  -1 & 0 \\ 0 & 1 \end{pmatrix}, \begin{pmatrix} 0 & -1 \\ 1 & 0 \end{pmatrix}, \begin{pmatrix} 0 & 1 \\ 1 & 0 \end{pmatrix} \right\}
\end{align*}
forming the point group of the square austenite lattice.

Similarly, when the austenite lattice is of hexagonal structure and transforms into a parallelogram, the associated stress-free states take the form

\begin{align}
\label{eq:K3}
\K_3:= \left\{Q_3(a)U_1,\dots,Q_3(a)U_6 \right\}, \ \mbox{ with } U_1 = \begin{pmatrix} a & \sqrt{3}(a^{-1}-a) \\ 0 & a^{-1} \end{pmatrix},
\end{align}
with $a>0$ and where $Q_3(a)$ is an explicit rotation matrix which depends on $a$ (see, for instance, \cite[Remark 2.3]{CDPRZZ20} where $Q_3(a) = R^t$).
The matrices $U_j:= P_j U_1 P_j^t$ are here obtained by conjugation of $U_1$ with the matrices 
\begin{align*}
P_j \in \mathcal{P}_h:=\left\{ Id, \begin{pmatrix}  -1 & 0 \\ 0 & 1 \end{pmatrix}, \frac{1}{2} \begin{pmatrix} 1 & \sqrt{3} \\ - \sqrt{3} & 1 \end{pmatrix}, \frac{1}{2} \begin{pmatrix} -1 & \sqrt{3} \\ \sqrt{3} & 1 \end{pmatrix},    \frac{1}{2} \begin{pmatrix} 1 & -\sqrt{3} \\ \sqrt{3} & 1 \end{pmatrix}, \frac{1}{2} \begin{pmatrix} 1 & \sqrt{3} \\ \sqrt{3} & -1 \end{pmatrix} \right\}
\end{align*}
forming the point group of the hexagonal austenite lattice.

By virtue of the results from \cite{CKZ17} and \cite{CDPRZZ20} there is a finite set of at most eight or twelve normal vectors, respectively, for which elements in $\K_3$ or $\K_4$ are compatible with the austenite phase. We denote these directions by the sets $ \mathcal{N}_3$ or $\mathcal{N}_4$, respectively. We remark that these sets are of the form
\begin{align*}
\mathcal{N}_j := \{P n_k P^t: P \in \mathcal{P}_j, \ k\in \{1,2\} \}, \ j \in \{3,4\}
\end{align*}
for some vectors $n_1, n_2 \in S^1$ and for $\mathcal{P}_3 = \mathcal{P}_h $ and $\mathcal{P}_4 = \mathcal{P}_c $. The normals $n_1, n_2$ can be computed explicitly (depending on $a$). As these are rather convoluted, we do not give the explicit expressions here.

Inserting these sets $\K_3, \K_4$ into the energy \eqref{eq:nonlinear_main}, we then obtain the following dichotomy which is analogous to the one of Theorem \ref{thm:main_gen}:

\begin{thm}
\label{thm:main}
Let $\Omega \subset \R^2$ be an open, bounded Lipschitz domain and let $\epsilon>0$.  Let $\E(u; \Omega)$ be as in \eqref{eq:nonlinear_main} and let $\K_3,\K_4$ be as in \eqref{eq:K4}, \eqref{eq:K3}. Consider the choices $\mathcal{K} = \mathcal{K}_3$ or $\K = \K_4$ in the geometrically nonlinear energy \eqref{eq:nonlinear_main}. 
Then the following holds:
\begin{itemize}
\item[(a)] Let $\Omega$ be a bounded, piecewise $C^3$-regular domain such that there exists a relatively open subset $\Gamma\subset\p\Omega$ for which $\nu(x)\not\in\mathcal{N}_3$ (or $\mathcal{N}_4$, respectively) for every regular point $x\in\Gamma$.
Then,
\begin{align*}
 \inf\limits_{u \in W^{1,2}(\Omega;\R^2) \text{ with } u_{|\Gamma} = id} \E(u;\Omega) \sim_{\Omega,\Gamma}  \min \{ 1, \epsilon (|\log \epsilon| + 1) \}.
\end{align*} 
\item[(b)] For any normal $n\in \mathcal{N}_{3}$ or $n\in \mathcal{N}_{4}$ there exists a bounded, polygonal domain such that $\partial \Omega = \partial \Omega_E \cup \partial \Omega_I$ with at least one interface of $\partial \Omega_E$ having $n$ as a normal and such that 
\begin{align*}
 \inf\limits_{u \in W^{1,2}(\Omega;\R^2) \text{ with } u_{|\partial \Omega_E} = id}  \E(u;\Omega) \sim_{\Omega} \min\{\epsilon,1\},
\end{align*}
where we consider the energies with wells $\K_3$ or $\K_4$, respectively.
\end{itemize}
\end{thm}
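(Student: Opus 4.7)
The proof naturally splits along the two parts. For (a), the lower bound reduces to an application of Theorem~\ref{prop:gen_domain_nonlinear}, while the upper bound is obtained by a dyadic branching construction near $\Gamma$. For (b), the upper bound is a direct use of the exactly stress-free microstructures built in \cite{CKZ17, CDPRZZ20}, and the lower bound follows from a simple energy dichotomy exploiting that $Id \notin \K_j$.

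\textbf{Part (a), lower bound.} The sets $\mathcal{N}_3$ and $\mathcal{N}_4$ are, by their very definition in \cite{CKZ17,CDPRZZ20}, exactly the sets of normals $n \in S^1$ of flat austenite--martensite interfaces, that is, those $n$ for which some $K \in \K_j$ admits a decomposition $K - Id = a \otimes n$ with $a \in \R^2$. Setting $\tau := n^{\perp}$, this Hadamard compatibility is equivalent to $K\tau = \tau$. Hence the hypothesis $\nu(x) \notin \mathcal{N}_j$ at every regular $x \in \Gamma$ reads $K\tau(x) \neq \tau(x)$ for every $K \in \K_j$ and every such $x$. I pick a relatively open, $C^3$-regular subarc $\tilde\Gamma \subset \Gamma$ whose closure avoids the finitely many singular points of $\partial\Omega$; by continuity of $\tau$ and compactness of $\overline{\tilde\Gamma}$ and of $\K_j$, the quantity $d := \min_{x\in\overline{\tilde\Gamma}}\min_{K \in \K_j}|K\tau(x) - \tau(x)|$ is strictly positive, so \eqref{eq:dist} holds on $\tilde\Gamma$. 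Since $u_{|\Gamma} = id$, the oscillation of $u-id$ on any subarc of $\tilde\Gamma$ vanishes and \eqref{eq:bc-nonlin-gen} holds trivially. Applying Theorem~\ref{prop:gen_domain_nonlinear} with $\Gamma$ replaced by $\tilde\Gamma$ then yields the stated $\epsilon(|\log\epsilon|+1)$ lower bound.

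\textbf{Part (a), upper bound.} A competitor with energy $\mathcal{O}(\epsilon|\log\epsilon|)$ is constructed by a Kohn--Müller-type branching procedure in the spirit of \cite{KM94, B3}. In the interior of $\Omega$ I place either a single martensitic variant or a simple laminate of two compatible variants from $\K_j$, exploiting the rank-one connections among wells catalogued in \cite{CKZ17,CDPRZZ20}. In a thin tubular neighbourhood of $\Gamma$ the martensitic pattern is refined dyadically as $\Gamma$ is approached, transplanted onto each smooth piece of $\Gamma$ after a $C^3$ straightening of the boundary; after $\mathcal{O}(|\log\epsilon|)$ generations the scale of the pattern is comparable to $\epsilon$ and the residual mismatch with the austenite datum is absorbed in an elastic boundary layer of width $\epsilon$. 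Each generation contributes $\mathcal{O}(\epsilon)$ in surface energy and the cumulative elastic contribution is controlled by a geometric series giving the matching $\mathcal{O}(\epsilon|\log\epsilon|)$ upper bound. This is the principal obstacle of the proof, since every internal interface and the innermost interpolation to the austenite data must be realized through (rank-one) compatible transitions among elements of $\K_j\cup\{Id\}$, and this compatibility bookkeeping has to be combined with the boundary straightening and the full vectorial energy accounting.

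\textbf{Part (b).} For the upper bound, given $n \in \mathcal{N}_j$, the polygonal domain $\Omega$ is chosen as one of the star-shaped configurations constructed in \cite{CKZ17,CDPRZZ20}, featuring $n$ as the normal of at least one edge of $\partial\Omega_E$. That construction provides a $BV$-regular, exactly stress-free $u: \Omega \to \R^2$ with $\nabla u \in \K_j$ a.e., $u = id$ on $\partial\Omega_E$, and bounded total interface length; inserting $u$ into $\E$ gives zero elastic contribution and surface contribution $\lesssim \epsilon$. For the matching lower bound I argue by dichotomy: either $\int_\Omega \dist^2(\nabla u, \K_j)\,dx \geq c$, in which case the claim follows since $c \geq c\min\{1,\epsilon\}$; or this term is small, so $\nabla u$ lies $L^2$-close to $\K_j$ on most of $\Omega$. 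In the latter case, since $Id \notin \K_j$ and $u = id$ on $\partial\Omega_E$, at least one austenite--martensite transition must be present inside $\Omega$, and this transition necessarily contributes $\epsilon |D^2u|(\Omega) \gtrsim \epsilon$, completing the argument.
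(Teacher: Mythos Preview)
Your lower bound in (a) and your upper bound in (b) are correct and match the paper's approach. For the lower bound in (b) the paper (in the geometrically linear analogue, Theorem~\ref{thm:main_gen}(b)) proceeds via a Poincar\'e argument rather than your dichotomy: the boundary datum forces the mean of $\nabla u-Id$ to vanish, hence $|D^2u|(\Omega)\gtrsim\|\nabla u-Id\|_{L^1(\Omega)}$, and pointwise $|\nabla u-Id|+\dist(\nabla u,\K_j)\ge\dist(Id,\K_j)>0$. Your dichotomy reaches the same conclusion, though the step ``at least one austenite--martensite transition must be present'' would still need a short slicing argument (integrating $\nabla u-Id$ along chords with endpoints on $\partial\Omega_E$) to be rigorous.

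The genuine gap is your upper bound construction in (a). A Kohn--M\"uller-type branching of a simple two-variant laminate toward $\Gamma$ does \emph{not} yield $\epsilon|\log\epsilon|$: the period-halving transitions between consecutive generations cannot be realized by an exactly stress-free piecewise affine map using only two rank-one connected wells, so every generation carries elastic cost, and the standard optimization of elastic versus surface contributions gives the classical $\epsilon^{2/3}$ scaling rather than $\epsilon|\log\epsilon|$. The $C^3$ straightening you add only compounds this, since it destroys the exact rank-one structure of the laminate. The paper's construction is quite different and, crucially, \emph{reuses the objects from part (b)}: one greedily covers $\Omega$ by dyadically shrinking copies of the exactly stress-free polygonal building blocks of \cite{CKZ17,CDPRZZ20} (the star-type configurations, which satisfy $u=id$ on their own boundaries). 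At scale $2^{-\ell}$ one places $\sim 2^{\ell}$ such blocks, each contributing $\sim 2^{-\ell}$ to $|D^2u|$ and \emph{zero} elastic energy; thus every scale adds $O(1)$ to $|D^2u|$. Stopping after $m\sim|\log\epsilon|$ scales leaves a boundary layer of width $\sim 2^{-m}$ where one sets $u=id$, giving $\E\sim 2^{-m}+\epsilon m\sim\epsilon|\log\epsilon|$. The mechanism you are missing is that the $\epsilon|\log\epsilon|$ upper bound rests precisely on the existence of these exactly stress-free, austenite-bounded building blocks---the very structures you invoke for (b) but not for (a).
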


In parallel to Theorem \ref{thm:main_gen}, Theorem \ref{thm:main} provides a dichotomy in which the extremely low energy scaling law strongly depends on the interaction of the symmetries of the underlying domain, the imposed boundary data and the energy wells. On the one hand, by virtue of Theorem \ref{thm:main}(b) in very special, highly non-generic domains, the optimal linear scaling result can be achieved. This however requires extreme compatibility between all involved symmetries. On the other hand, in general domains, as soon as a part of the boundary of the domain does no longer match the symmetries of the wells, the energy scaling deteriorates by a logarithmic factor. As in the geometrically linear setting, the scaling law thus \emph{selects nucleation geometries} based on the symmetries of the energy wells.

\begin{rmk}
\label{rmk:gen_nonlin}
The results which were formulated above for the two cases of the square-to-oblique and the hexagonal-to-oblique transformations remain valid, when replacing these sets $\K_3, \K_4$ by a more general set $\K_n$ for $n\in \N$.
In this setting, the matrix $U_1 = U_1(a,n)$ takes the form
\begin{align*}
U_1(a,n) = \begin{pmatrix}
a & \frac{a^{-1}-a}{\tan(\phi)} \\
0 & a^{-1}
\end{pmatrix},
\end{align*}
where $a\in \R_+$ and $\phi = \frac{n-2}{2n} \pi$. As in the special cases outlined above, the remaining stress-free deformations are then obtained by conjugation with a symmetry group of a regular $n$-gon (generalizing the action of the point group). 
For a more precise discussion on the structure of the sets $\K_n$ which generalize the sets $\K_3, \K_4$ to an arbitrary polygonal symmetry we refer to \cite[Section 2]{CDPRZZ20}. The existence of the desired low energy constructions is the content of \cite[Theorem 1]{CDPRZZ20}. In order to avoid introducing the associated notation, we have opted not to formulate the full analogue of the scaling law of Theorem \ref{thm:main} in the most generality at this point.
\end{rmk}

\subsection{Interpretation, implications and comparison with the literature}
Let us put the results from the previous sections into the context of the literature on martensitic phase transformations.

\subsubsection{Relation to the literature}
Seeking to understand the complex microstructure formed by martensitic materials more quantitatively and starting with the seminal results of Kohn-Müller \cite{KM1, KM2}, singular perturbation problems and associated models have been analysed intensively. A non-exhaustive list includes the works \cite{C1, L06, ContiSchweizer06, JerrardLorent2013, DM1, CT05,CO, CO1, ContiChan14a, CC15, BG15, CZ16, CDZ17, CDMZ20, TS21, TS21b, DF20, Rue16b, AKKR23, RT23b, TZ23} focusing on two-well and/or on branching microstructures, the articles \cite{RT22, RT23a,RT23} on particularly high energy structures, \cite{C99, KMS03, MS, CS13, RZZ16, RZZ18,RTZ19} for non-uniqueness results and the works \cite{KK,Pompe,BK16,Z14,KKO13, KO19, CKZ17, CDMZ20, CDPRZZ20, RT23, GZ23, GZ23a,G23} on nucleation and/or low energy structures. Many of these results are also closely connected to complex multi-scale microstructures arising for other non-convex, multi-well problems in materials \cite{K07}, see for instance \cite{CKO99, KW14, KW16, KM13,RRT23, RRTT24, GZ23}. The results in the present article build partly on techniques developed there and connect to the study of low energy microstructures for nucleation, focusing on the role of geometry. Viewing our models as depending on the domain as a parameter, Theorems \ref{thm:main_gen} and \ref{thm:main}, as some of our main results, prove that scaling laws can give detailed information on optimal nucleation structures for specific classes of two-dimensional martensitic phase transformations. 

\subsubsection{Interpretation and implications}
Viewing the examples from the previous sections as two-dimensional model cases for the study of low energy nucleation results, the outlined explicit dependence of the scaling laws on the domain geometry, rigorously explains the formation of special, highly symmetric nucleation structures for these model problems. In contrast to other settings, in these models the energy scaling laws from Theorems \ref{thm:main_gen} and \ref{thm:main} thus do not only predict the \emph{size of the  energy barriers} for nucleation of martensite from austenite. In addition to this already quantitative information which however usually only indirectly carries information on the nucleation microstructure, our scaling laws also \emph{yield precise structure conditions on the domain geometry of minimally scaling configurations}. More precisely, they can be viewed as \emph{selecting} microstructures in which the domain geometry and the symmetries of the energy wells are highly matching. As a consequence, our results predict nucleation geometries of a certain polygonal structure, depending on the energy well symmetry in the model settings of the transformations in Theorems \ref{thm:main_gen} and \ref{thm:main}.

\subsection{Outline of the article}
The remainder of the article is structured as follows: In Section \ref{sec:proof_lower} we deduce the general lower bound estimates from Theorems \ref{prop:gen_domain_nonlinear} and \ref{prop:gen_domain_lin}. Then, in Section \ref{sec:theorems_proof}, we apply these to infer the proofs of Theorems \ref{thm:main_gen} and \ref{thm:main}.

\section{Proof of the lower scaling bound in generic domains}
\label{sec:proof_lower}

We begin our discussion of the effect of geometry on our class of two-dimensional martensitic phase transformations by deducing the general lower bound estimates contained in Theorems \ref{prop:gen_domain_nonlinear} and \ref{prop:gen_domain_lin}. This is split into two parts. In Section \ref{sec:square_model} we first deduce these in the special case of a square domain with ``austenite-type'' boundary conditions. In Section \ref{sec:trafo} we transform these to general piecewise $C^3$ domains.

\subsection{Proof of the lower scaling bound in the square domain}
\label{sec:square_model}

In this section, we present a first variant of the lower bound estimates from Theorems \ref{prop:gen_domain_nonlinear} and \ref{prop:gen_domain_lin} for the special case of $\Omega:=[0,1]^2$ being the square domain. This is inspired by the arguments from \cite{GZ23,GZ23a} which we extend to the fully vectorial setting in the presence of gauges.
In the next section, using suitable coordinate transformations, we will then transfer these results to more general domains.

\begin{prop}
\label{prop:ref_domain}
Let $\mathcal{K} \subseteq \R^{2\times 2}$ and $\tilde{\K} \subset \R^{2\times 2}_{sym}$ be compact. 
\begin{enumerate}
    \item \underline{\emph{Geometrically nonlinear setting:}} assume that  
\begin{align}
\label{eq:incomp_norm1}    
     d:=\min_{K \in \mathcal{K}} |K e_2 - e_2| > 0,
\end{align}     
      then there exists a constant $c>0$ such that for all $u \in W^{1,2}((0,1)^2;\R^2)$ with  
\begin{align}
	\label{eq:trace_nonlin}  
	\begin{split}  
		\osc_{\{0\}\times I}(u-id)\le\frac{d}{104}|I|, \text{ for every open interval $I\subset(0,1)$},
	\end{split}
\end{align}
and all $\epsilon>0$ the following lower bound holds:
\[
\E(u) := \int_{(0,1)^2} \dist^2(\nabla u,\mathcal{K}) \, dx + \epsilon |D^2u|((0,1)^2) \geq c \min\{ 1, \epsilon (|\log \epsilon| + 1) \}.
\] 
    \item \underline{\emph{Geometrically linear setting:}} assume that 
\begin{align}
\label{eq:incomp_norm2}    
    d:=\min_{\tilde{K} \in \tilde{\mathcal{K}}} |\tilde{K}_{22}| > 0,
\end{align}    
     then there exists a constant $c>0$ such that for all $u \in W^{1,2}((0,1)^2;\R^2)$ with   
\begin{align}
	\label{eq:trace_lin}    
	\begin{split}
		\osc_{\{0\}\times I}(u)\le\frac{d}{104}|I|, \text{ for every open interval $I\subset(0,1)$},
	\end{split}
\end{align} 
and all $\epsilon>0$ the following lower bound holds:
\[
\F(u) := \int_{(0,1)^2} \dist^2(e( u),\tilde{\mathcal{K}}) \, dx + \epsilon |D^2u|((0,1)^2) \geq c \min\{ 1, \epsilon (|\log \epsilon| + 1) \}.
\] 
\end{enumerate}
In both cases the constant $c>0$ only depends on the lower bound $d>0$ and on $\K$ (resp.\ $\tilde\K$). 
\end{prop}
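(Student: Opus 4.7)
My plan is to treat the nonlinear and linear statements in parallel, since they share the same logical backbone: the hypotheses \eqref{eq:incomp_norm1} and \eqref{eq:incomp_norm2} both say that, along the tangential direction $e_2$, no well matches the reference state (the identity for $\nabla u$ in the nonlinear case, the zero strain for $e(v)$ in the linear case), while \eqref{eq:trace_nonlin} and \eqref{eq:trace_lin} are quantitative smallness conditions on the oscillation of $u - \mathrm{id}$, respectively $v$, on the left edge $\{0\} \times (0,1)$. I first reduce to the regime $\epsilon \leq \epsilon_0 = \epsilon_0(d, \K)$ for a small threshold: when $\epsilon$ is bounded below, the target $\min\{1, \epsilon(|\log \epsilon|+1)\}$ is a universal constant, and one proves directly that no $u$ with $\nabla u \in \K$ a.e.\ is compatible with \eqref{eq:trace_nonlin}, since
\[
u(0,1) - u(0,0) = \int_0^1 \partial_2 u(0, x_2)\, dx_2
\]
would lie both in $\conv\{Ke_2 : K \in \K\}$ and within $d/104$ of $e_2$; a short mollification/continuity argument upgrades this rigidity to a positive energy lower bound for any admissible $u$.

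In the nontrivial regime $\epsilon \leq \epsilon_0$, the proof proceeds by a \emph{dyadic scale-by-scale} decomposition of a boundary layer. Set $N := \lfloor \log_2(1/\epsilon) \rfloor$ and consider the shells $A_k := (2^{-k-1}, 2^{-k}) \times (1/4, 3/4)$ for $k = 0, \ldots, N$. The core step is to establish a shell-wise lower bound
\[
\E(u; A_k) \geq c\,\epsilon
\]
with $c = c(d, \K) > 0$ independent of $k$ and $\epsilon$; summing over the $N \sim |\log \epsilon|$ shells then yields the desired $c\epsilon(|\log \epsilon| + 1)$ lower bound. For this shell bound I would adapt the trace-propagation plus one-dimensional slicing strategy of \cite{GZ23, GZ23a}: using $BV$ trace theory in $x_1$ to integrate the identity $\partial_{12} u = \partial_2(\partial_1 u)$, one pushes the boundary oscillation estimate \eqref{eq:trace_nonlin} inward to a positive-measure set of levels $s \in (2^{-k-1}, 2^{-k})$, with an additive loss controlled by $|D^2 u|((0,s)\times(0,1))$. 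If the shell energy were substantially smaller than $\epsilon$, a Chebyshev-type choice would pick a good level $s$ on which $\osc_{\{s\}\times I}(u - \mathrm{id}) \leq (d/52)|I|$ for every subinterval $I \subset (1/4, 3/4)$. Then one-dimensional slicing in $x_2$ constrains the average of $\partial_2 u$ on each $I$ to lie within $d/52$ of $e_2$, whereas \eqref{eq:incomp_norm1} keeps every value $Ke_2$ at distance $\geq d$ from $e_2$: the only way to reconcile the two is through pointwise elastic defect or $BV$ jumps of $\partial_2 u$ between different wells, and either mechanism integrated over the shell forces at least $c\,\epsilon$ of energy.

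The linear statement follows the same scheme with $\nabla u$ replaced by $e(v)$, the reference state $\mathrm{id}$ replaced by $0$, and the role of $|Ke_2 - e_2|$ played by $|e_2 \cdot Ke_2| = |K_{22}|$; the one-dimensional slicing in $x_2$ picks up $\partial_2 v \cdot e_2 = (e(v))_{22}$, and \eqref{eq:incomp_norm2} produces the analogous obstruction to the average being close to $0$. The main technical obstacle I anticipate is the quantitative trace-propagation step: one must control $\osc_{\{s\}\times I}(u - \mathrm{id})$ uniformly over \emph{every} subinterval $I$ and over a positive-measure set of levels $s$, simultaneously absorbing the $L^2$ elastic defect and the $BV$ part of $\nabla u$ into a single oscillation estimate. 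The explicit constant $1/104$ in \eqref{eq:trace_nonlin}, \eqref{eq:trace_lin} is calibrated to give this step the needed slack: after the propagation-induced factor-$2$ loss, the resulting bound $d/52 < d$ keeps the incompatibility strictly active on the interior slices selected by Chebyshev.
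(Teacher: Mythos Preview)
Your high-level architecture is right: dyadic layers near $\{x_1=0\}$, a slice-wise lower bound, and the incompatibility \eqref{eq:incomp_norm1} forcing either elastic defect or interfacial energy. But two concrete steps in your plan do not go through as written.

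\medskip

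\textbf{The propagation step in the nonlinear case.} You propose to push the oscillation bound \eqref{eq:trace_nonlin} from $\{0\}\times I$ to an interior slice $\{s\}\times I$ ``with an additive loss controlled by $|D^2u|((0,s)\times(0,1))$''. This is the wrong error term. Passing from $u(0,\cdot)$ to $u(s,\cdot)$ costs $\int_0^s \partial_1 u\,dx_1$, and the elastic energy only says $\partial_1 u\approx K e_1$ for some $K\in\K$; the size $|K e_1|$ is order one, not small. So the unavoidable propagation error is of order $s\cdot\max_{K\in\K}|K|$, independent of $|D^2u|$. This kills your claim that the propagated bound holds for \emph{every} subinterval $I\subset(1/4,3/4)$: for short $I$ the order-$s$ error swamps $(d/52)|I|$. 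The paper's proof handles exactly this by choosing a \emph{single} interval $I$ of length $|I|=c_1 x$ with $c_1\gtrsim \max_{K}|K|/d$, so that the term $c_3 x|I|$ is absorbed into $\frac{c_2}{8}|I|^2$; see the chain ending in \eqref{eq: lb NS triangle}. The argument then compares $L^1$-norms on that one interval (not pointwise oscillation), and the propagation is done via the \emph{elastic} energy on the strip $(0,x)\times I$, not via $|D^2u|$. The second-derivative term enters only through the trichotomy (a)/(b)/(c) on the slice $\{x\}\times I$, to pin down a definite sign for a component of $\partial_2 u$. Even if you instead tried to propagate $\partial_2 u$ itself (so that the loss genuinely is controlled by $|D^2u|$), you would need $|D^2u|((0,s)\times(0,1))$ small, and under your standing assumption this quantity can be as large as $c_0|\log\epsilon|$---bounding only the shell contribution $|D^2u|(A_k)$ does not help, since the cumulative mass from the boundary is what matters.

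\medskip

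\textbf{The linear case is not a verbatim analogue.} You write that one simply replaces $|Ke_2-e_2|$ by $|K_{22}|$ and slices in $x_2$. The vertical slicing on $\{x\}\times I$ is fine (it sees $(e(v))_{22}=\partial_2 v_2$), but the propagation from $\{0\}$ to $\{x\}$ is not: moving $v_2$ horizontally requires $\partial_1 v_2$, which is \emph{not} controlled by $e(v)$ (the skew part is free). The paper resolves this by propagating along the diagonals $\xi=(1,1)$ and $\eta=(1,-1)$, using the identities $\partial_\xi(v\cdot\xi)=\xi\cdot e(v)\xi$ and $\partial_\eta(v\cdot\eta)=\eta\cdot e(v)\eta$ on the parallelogram strips $S_1,S_2$, then recovering $v_2=\tfrac12(v\cdot\xi-v\cdot\eta)$; see \eqref{eq: est uxi}--\eqref{eq: est ueta}. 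Your outline contains no mechanism for this and would stall at the horizontal transport of $v_2$.
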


Let us comment on the assumptions of the proposition. As in Theorem \ref{prop:gen_domain_nonlinear} the condition \eqref{eq:incomp_norm1} corresponds to the failure of the Hadamard jump condition between martensite and austenite at any point of the boundary portion given by $x=0$. It hence implies that at this part of the boundary necessarily microstructure emerges in the energy minimization problem with austenite boundary conditions. This entails the presence of the logarithmic losses in the lower bound estimate, provided one assumes austenite boundary conditions. The second assumption \eqref{eq:trace_nonlin} allows to generalize the boundary data from austenite data to small perturbations of austenite in a way that preserves the failure of the Hadamard jump condition and thus the incompatibility of the wells at the boundary. It quantifies the smallness which is necessary to still obtain logarithmic losses in the lower bound estimate. In our proof of Theorem \ref{prop:gen_domain_nonlinear} this will allow us to work with curvilinear boundaries. The conditions in the geometrically linearized setting are analogues of this.

Approaching the proof of Proposition \ref{prop:ref_domain}, we begin by discussing an auxiliary result. In this context, it will be convenient to introduce localized versions of the energies from \eqref{eq:nonlinear_main} and \eqref{eq:linear_main}. To this end, we set
\begin{align}
\label{eq:en_localized}
\begin{split}
E_{\epsilon}(u,\omega)&:= \int\limits_{\omega} \dist^2(\nabla u, \mathcal{K}) dx + \epsilon |D^2 u|(\omega),\\
F_{\epsilon}(u,\omega)&:=  \int\limits_{\omega} \dist^2(e( u), \tilde{\mathcal{K}}) dx + \epsilon |D^2 u|(\omega),
\end{split}
\end{align}
for any $\omega \subset \Omega$ such that the quantities in the definitions of \eqref{eq:en_localized} are well-defined. 
In addition, for $x \in (0,1)$ and $I\subseteq (0,1)$ Lebesgue-measurable, we write for $u \in W^{1,2}(\Omega;\R^2)$ with $\nabla u \in BV(\Omega;\R^{2\times 2})$
\begin{align}
\label{eq:slice_energy}
E_{\epsilon}(u;\{x\} \times I) := \int_{I} \dist^2(\nabla u(x,y), \mathcal{K}) \, dy + |\partial_2 \nabla u(x,\cdot)|(I).
\end{align}
Note that since $\nabla u \in BV((0,1)^2;\R^{2\times2})$ this formula makes sense for almost every $x \in (0,1)$ in the sense of slicing of $BV$-functions, see \cite{AFP2000}. 
We use an analogous notation for $F_{\epsilon}(u,\{x\} \times I)$.

\begin{lem}\label{lem: lb log}
Let $\mathcal{B} \subseteq W^{1,2}((0,1)^2)$.
Assume that there exist constants $ \alpha,\beta \in ( 0,1]$ such that for almost all $x \in (\frac{\alpha}2 \min\{ 1, \epsilon (|\log \epsilon| + 1) \},\alpha)$ and all $u \in \mathcal{B}$ it holds
\[
\E(u;\{x\} \times (0,1)) \geq \beta \min\{ \frac{\epsilon}x, 1 \}.
\]
Then there exists a constant $c > 0$ such that
\[
\inf_{u \in \mathcal{B}} \E(u;\Omega) \geq c \min\{\epsilon (|\log \epsilon|+1),1\}.
\]
The same holds if one replaces $\E$ by $\F$.
\end{lem}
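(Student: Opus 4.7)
The plan is to reduce the two-dimensional bound to an elementary one-dimensional logarithmic integral by slicing the energy in the $x_1$-direction.

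\emph{Step 1 (slicing).} The standard $BV$ slicing theorem (see, e.g.,~\cite{AFP2000}) yields, for $u\in W^{1,2}$ with $\nabla u \in BV$,
\[
\int_0^1 |\partial_2\nabla u(x,\cdot)|((0,1))\,dx = |\partial_2\nabla u|((0,1)^2) \leq |D^2 u|((0,1)^2),
\]
while Fubini takes care of the elastic term. Combining these and tracking the $\epsilon$-weight on the surface contribution in~\eqref{eq:slice_energy}, one obtains a universal $c_0>0$ with
\[
\E(u;(0,1)^2) \geq c_0 \int_A^{\alpha} \E(u;\{x\}\times(0,1))\,dx, \qquad A := \tfrac{\alpha}{2}\min\{1,\epsilon(|\log\epsilon|+1)\},
\]
at least for $\epsilon\leq 1$; the complementary case $\epsilon\geq 1$ is handled directly, since then $A=\alpha/2$ and the slicewise hypothesis yields a positive contribution on an interval of length $\alpha/2$, giving $\E\gtrsim 1 = \min\{1,\epsilon(|\log\epsilon|+1)\}$.

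\emph{Step 2 (elementary integration).} Inserting the slicewise lower bound from the hypothesis,
\[
\E(u;(0,1)^2) \geq c_0\beta\int_A^{\alpha}\min\!\left\{\tfrac{\epsilon}{x},1\right\}dx.
\]
For $\epsilon$ sufficiently small, one has $(\alpha/2)(|\log\epsilon|+1)>1$ and hence $A>\epsilon$, so that $\min\{\epsilon/x,1\}=\epsilon/x$ throughout the range of integration and
\[
\int_A^\alpha \tfrac{\epsilon}{x}\,dx = \epsilon\log\!\bigl(\tfrac{\alpha}{A}\bigr) = \epsilon\bigl(\log 2-\log\epsilon-\log(|\log\epsilon|+1)\bigr) \geq \tfrac{1}{2}\epsilon|\log\epsilon|,
\]
which is of the claimed order $\epsilon(|\log\epsilon|+1)$. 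The remaining intermediate parameter window $\{A\leq\epsilon\leq 1\}$ is a compact set on which both sides of the asserted inequality are continuous and strictly positive, so it is absorbed by enlarging~$c$.

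\emph{Main obstacle and the $\F$-case.} There is no deep obstacle; the only point that requires care is Step~1, namely the precise placement of the factor $\epsilon$ when passing from the full two-dimensional energy to the integrated slice energy via the BV slicing formula. The corresponding bound for $\F$ follows by a verbatim repetition of the argument, replacing $\nabla u$ by $e(v)$ and $\K$ by $\tilde{\K}$.
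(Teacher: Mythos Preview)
Your argument is correct and follows essentially the same route as the paper: reduce to the one-dimensional integral $\int_A^{\alpha}\beta\min\{\epsilon/x,1\}\,dx$ via slicing, then evaluate it for small $\epsilon$ and treat large $\epsilon$ separately. The paper compresses your Step~1 into the phrase ``by assumption'' and, in Step~2, uses the pointwise inequality $\min\{\epsilon/x,1\}\geq \tfrac{\alpha}{2}\,\epsilon/x$ on $x\geq \tfrac{\alpha}{2}\epsilon$ rather than your observation that $A>\epsilon$; your compactness treatment of the intermediate $\epsilon$-window is an acceptable variant of the paper's explicit threshold $\epsilon_0$.
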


\begin{proof}
We only show the statement for $\E$. By assumption it holds 
\[
\inf_{u \in \mathcal{B}} \E(u; \Omega) \geq \int_{\frac{\alpha}2 \min\{ 1, \epsilon (|\log \epsilon| + 1) \}}^{\alpha} \beta \min\{ \frac{\epsilon}x, 1 \} \, dx.
\]
Assume first that $\epsilon \leq \epsilon_0$ for some $0 < \epsilon_0 \leq 1$. 
We notice that $\min\{\frac{\epsilon}x,1 \} \geq \epsilon \frac{\alpha}{2x}$ for all $x \geq \frac{\alpha}2 \epsilon$.
Hence, it follows that
\begin{align}
\begin{split}
\int_{\frac{\alpha}2 \min\{ 1, \epsilon (|\log \epsilon| + 1)\}}^{\alpha} \beta \min\{ \frac{\epsilon}x, 1 \} \, dx &\geq \frac{\alpha \beta}{2} \epsilon  \left( |\log  \epsilon|  + \log (2) - \log (|\log \epsilon| + 1) \right) \\ & \geq \frac{\alpha \beta}{4} \epsilon (|\log \epsilon| + \log(2))
\end{split}
\end{align}
for $\epsilon_0$ small enough. This implies the assertion for $c \leq \frac{\alpha \beta \log(2)}{4 }$.
On the other hand, we have for $\epsilon \geq \epsilon_0$ 
\[
\int_{\frac{\alpha}2 \min\{ 1, \epsilon (|\log \epsilon| + 1)\}}^{\alpha} \beta \min\{ \frac{\epsilon}x, 1 \} \, dx \geq \int_{\alpha/2}^{\alpha} \beta \min\{ \frac{\epsilon_0}{\alpha}, 1\} \, dx = \frac{\alpha\beta}2 \min\{ \frac{\epsilon_0}{\alpha}, 1\},
\]
which implies the assertion for $c\leq \frac{\alpha\beta}2 \min\{ \frac{\epsilon_0}{\alpha}, 1\}$. Hence, we define $c = \frac{\alpha\beta \log(2)}4 \min\{ \frac{\epsilon_0}{\alpha},1 \}$ which implies the desired claim.
\end{proof}

With Lemma \ref{lem: lb log} in hand, we turn towards the proof of Proposition \ref{prop:ref_domain}.

\begin{proof}
We start with the geometrically nonlinear setting and define the following constants $c_2 := \min\{\frac{d}{13}, \frac{1}{13} \}$,
$c_3 := \max_{K \in \mathcal{K}} |K|$ and $c_1 := \frac12 + 16 \frac{c_3}{c_2} \geq 1/2$. Without loss of generality, we assume that $\E(u) \leq \frac{c_2^2}{4c_1 \cdot 8192} \min\{1, \epsilon (|\log \epsilon| + 1)\}$.
Under these assumptions, we will show the following claim. \\

\underline{\textit{Claim:}} There exists a constant $c>0$ such that for almost all $x \in (\frac{1}{4c_1} \min\{ 1, \epsilon (|\log \epsilon| + 1) \},\frac1{2c_1})$,  all $u\in W^{1,2}(\Omega)$ satisfying \eqref{eq:trace_nonlin}  and all $\epsilon > 0$ it holds that
\[
\E(u;\{x\} \times (0,1)) \geq c \min\{ \frac{\epsilon}x, 1 \}.
\]
Given the claim, the assertion of the geometrically nonlinear statement of Proposition \ref{prop:ref_domain} follows from Lemma \ref{lem: lb log} above. It thus remains to provide the argument for the claim.

To this end, let $x \in (\frac{1}{4c_1} \min\{ 1, \epsilon (|\log \epsilon| + 1) \},\frac1{2c_1})$. 
Then, let $I \subseteq (0,1)$ be such that $|I| = c_1 x$,
\begin{equation}
\label{eq:localize}
\E(u; \{x\} \times I) \leq 8 |I| \E(u;\{x\} \times (0,1)), \text{ and } \E(u; (0,1) \times I) \leq 8 |I| \E(u).
\end{equation}
By definition of the energy (see \eqref{eq:slice_energy}), as consisting of an elastic and a surface contribution, on $I$ one of the following estimates holds: 
\begin{enumerate}
    \item[(a)] $|\partial_2 \nabla u(x,\cdot)|(I) \geq c_2$,
    \item[(b)] $\dist(\nabla u(x,y), \mathcal{K}) \geq c_2$ for a.e. $y \in I$,
    \item[(c)] $\dist(\nabla u(x,y), \mathcal{K}) \leq 3c_2$ for a.e. $y \in I$.
\end{enumerate}
Indeed, either the deformation gradient always stays close to the wells (as in case (c)), or is always far away from the wells (as in case (b)) or jumps between being close and far from the wells at some points in $I$ (as in case (a)).
If (a) or (b) hold, then 
\[
\E(u;\{x\} \times I) \geq c_2 \min\{ \epsilon, c_2 |I|\},
\]
which, by \eqref{eq:localize}, implies that
\[ 
\E(u; \{x\} \times (0,1) ) \geq \frac{c_2}{8 |I|} \min\{ \epsilon, c_2 |I|  \} = \frac{c_2}{8} \min\{ \frac{\epsilon}{c_1 x}, c_2\}
\]
and hence results in the claim.
Therefore, it suffices to show the claim assuming that (c) holds but (a) and (b) do not hold.
Then it follows from our assumption \eqref{eq:incomp_norm1} on the quantitative failure of the Hadamard jump condition, and (c) for almost every $y \in I$
\[
|\partial_2 u(x,y) - e_2| \geq d - \dist(\nabla u(x,y), \mathcal{K}) \geq 10 c_2.
\]
This implies that for almost every $y \in I$ 
\[
|\partial_2 u_1(x,y) | \geq 5c_2 \text{ or } |\partial_2 u_2(x,y) - 1| \geq 5c_2.
\]
As we assume that (a) does not hold, we obtain
\[
|\partial_2 u_1(x,y) | \geq 4c_2 \text{ for a.e. } y \in I  \text{ or } |\partial_2 u_2(x,y) - 1| \geq 4c_2 \text{ for a.e. } y\in I.
\]
Again, as (a) does not hold, this implies that one of the following holds:
\begin{enumerate}
    \item $\partial_2 u_1(x,y) \geq 4c_2$ for a.e. $y\in I$,
        \item $\partial_2 u_1(x,y) \leq -4c_2$ for a.e. $y\in I$,
            \item $\partial_2 u_2(x,y) - 1\geq 4c_2$ for a.e. $y\in I$,
                \item $\partial_2 u_2(x,y) - 1 \leq - 4c_2$ for a.e. $y\in I$.
\end{enumerate}
Without loss of generality, we assume that (1) holds.
We define $v: I \to \R$ such that $v'(y) = K(y)_{12}$ where $K: I \to \mathcal{K}$ is a measurable function such that $\dist(\nabla u(x,y),\mathcal{K}) = |\nabla u(x,y) - K(y)|$.
It follows by (1) and (c) that $v'(y) \geq c_2$ for a.e. $y\in I$.
This yields
$$
\frac{c_2}4 |I|^2 \leq \min_{a \in \R} \| v - a\|_{L^1(I)}.
$$

Next, note that  for any  $y_0\in I$ for $a_0:=u_1(0,y_0)$ it holds 
\begin{align*}
	\| u_1(x,\cdot)-a_0\|_{L^1(I)} &\le \| u_1(0,\cdot)- u_1(0,y_0)\|_{L^1(I)} + \| u_1(x,\cdot)- u_1(0,\cdot)\|_{L^1(I)} \\
	&\le \int_I \esup_{y_1,y_2\in I}|u_1(0,y_1)-u_1(0,y_2)|dy + \| u_1(x,\cdot)- u_1(0,\cdot)\|_{L^1(I)} \\
	&\leq \frac{c_2}{8}|I|^2 + \|\p_1 u_1\|_{L^1((0,x)\times I)} \leq \frac{c_2}{8}|I|^2 + x c_3|I| + \|K_{11}-\p_1 u_1\|_{L^1((0,x)\times I)} \\
	& \leq \frac{c_2}{8}|I|^2 + c_3 x |I| + x^{1/2} |I|^{1/2} \E(u; (0,1) \times I)^{1/2}.
\end{align*}
Hence, for $a_1:=\fint_I (u_1(x,y)-v(y))dy$ it follows by the triangle inequality, Poincar\'e's inequality and H\"older's inequality
\begin{align} \label{eq: lb NS triangle}
\begin{split}
    \frac{c_2}4 |I|^2 &\leq 
     \|v- a_1 - a_0\|_{L^1(I)} \\
    & \leq \| u_1(x,\cdot) - v - a_1\|_{L^1(I)} + \| u_1(x,\cdot)- a_0  \|_{L^1(I)}\\
    &\leq |I| \| \partial_2 u_1(x,\cdot) - v' \|_{L^1(I)} +\frac{c_2}{8}|I|^2 + c_3 x |I| + x^{1/2} |I|^{1/2} \E(u; (0,1) \times I)^{1/2} \\
    &\leq |I|^{3/2} \| \dist(\nabla u(x, \cdot), \mathcal{K}) \|_{L^2(I)}+\frac{c_2}{8}|I|^2  + c_3 x |I| + \sqrt{8} x^{1/2} |I| \E(u)^{1/2} \\ 
    &\leq \sqrt{8} |I|^2 \E(u;\{x\} \times (0,1))^{1/2}+\frac{c_2}{8}|I|^2  +  c_3 x |I| + \sqrt{8} x^{1/2} |I| \E(u)^{1/2}.
    \end{split}
\end{align}
This means that it holds 
\begin{align*}
\frac{c_2}{8} |I|^2 \leq 2 \sqrt{8} |I|^2 \E(u;\{x\} \times (0,1))^{1/2} \mbox{ or } \frac{c_2}{8} |I|^2 \leq 2 c_3 x |I| + 2 \sqrt{8} x^{1/2} |I| \E(u)^{1/2}.
\end{align*}
In the first case, we find $\E(u;\{x\} \times (0,1)) \geq \frac{c_2^2}{2048}$ which implies the claim in this case.
In the second case, we estimate 
\[
\frac{c_2}{16} |I|x \leq \frac{c_2}{8} |I|^2 - 2 c_3 x |I| \leq 2 \sqrt{8} x^{1/2} |I| \E(u)^{1/2},
\]
and, consequently,
\[
x \leq \frac{8192}{c_2^2} \E(u) \leq \frac1{4c_1} \min\{1, \epsilon (|\log \epsilon| + 1)\},
\]
i.e., $x$ is irrelevant for the claim.
This finishes the proof of the claim in the geometrically nonlinear setting.
\medskip

Let us now turn to the geometrically linearized setting. 
To this end, we define the following constants $c_5 := \frac{d}7$, $c_6 := 2 \max_{\tilde{K} \in \tilde{\mathcal{K}}} |\tilde{K}|$, $c_4 := 2 + 8\frac{c_6}{c_5} \geq 2$ and $c_7:=\frac{c_5 c_4^2}{8(c_4+1)^2}$.
Moreover, we assume that $\F(u) \leq \frac{c_5^2}{256^2c_4} \min\{ \epsilon (|\log\epsilon| + 1),1\}$. 
As before, the assertion follows from Lemma \ref{lem: lb log} once we show the following claim. \\

\underline{\textit{Claim:}} There exists a constant $c>0$ such that for almost all $x \in (\frac{1}{4c_4} \min\{ 1, \epsilon (|\log \epsilon| + 1) \},\frac1{2c_4})$, all $u\in W^{1,2}(\Omega;\R^2)$ satisfying \eqref{eq:trace_lin}  and all $\epsilon > 0$ it holds
\[
\F(u;\{x\} \times (0,1)) \geq c \min\{ \frac{\epsilon}x, 1 \}.
\]

Similarly to the definition above let $x \in (\frac{1}{4c_4} \min\{ 1, \epsilon (|\log \epsilon| + 1) \},1/(2c_4)) \subseteq (0,1/4)$, let $I \subseteq (1/4,3/4)$ be such that $|I| = c_4 x$ and
\[
\F(u;\{x\} \times I) \leq 16 |I| \, \F(u; \{x\} \times (0,1)) \text{ and } \F(u; S_i) \leq 16 |I| \, \F(u), \ i \in \{1,2\},
\]
where $S_1 = \{ (x-t,y-t) : y \in I,0< t < x \}$ and $S_2 = \{ (x-t,y+t) : y \in I,0< t < x \}$. 
Then, as in the geometrically nonlinear setting, one of the following conditions holds on $I$:
\begin{enumerate}
    \item[(d)] $|\partial_2 \nabla u(x,\cdot)|(I) \geq c_5$,
    \item[(e)] $\operatorname{dist}(e( u) (x,y), \K) \geq c_5$ for almost every $y\in I$,
    \item[(f)] $\operatorname{dist}(e( u) (x,y), \K) \leq 3 c_5$ for almost every $y\in I$.
\end{enumerate}
Again, if (d) or (e) holds, then
\[
16|I| \F(u;\{x\} \times (0,1)) \geq \F(u;\{x\} \times I) \geq c_5 \min\{ \epsilon, c_5 |I|\},
\]
which implies the claim.
Hence, we will assume again that (f) holds but (d) and (e) do not hold.
Then it follows by the assumption \eqref{eq:incomp_norm2} that for a.e.~$y \in I$
\[
|\partial_2 u_2(x,y)| \geq d - \operatorname{dist}(e( u)(x,y),\tilde{\K}) \geq 4 c_5.
\]
Since (d) does not hold, it follows that
\[
\partial_2 u_2(x,y) \geq 4 c_5 \text{ for a.e. $y \in I$ or } \partial_2 u_2(x,y) \leq -4 c_5 \text{ for a.e. $y \in I$.}
\]
Without loss of generality, we assume that $\partial_2 u_2(x,y) \geq 4 c_5$ for a.e.~$y \in I$.
Then define $w: I \to \R$ such that $w'(y) =  \tilde{K}_{22}(y)$, where $\tilde{K}: I \to \tilde{\K}$ is a measurable function such that $\operatorname{dist}(e(u)(x,y),\tilde{\K})=|e(u)(x,y)- \tilde{K}(y)|$. 
It follows by (f) (and the assumption that $\partial_2 u_2(x,y) \geq 4 c_5$) that $w'(y) \geq c_5$ for almost every $y \in I$.

Next, let $\xi = \begin{pmatrix} 1 \\ 1 \end{pmatrix}$ and let $y_0\in I$. 
Using the smallness of the boundary conditions \eqref{eq:trace_lin}, we estimate
\begin{align}
\begin{split}
\label{eq: est uxi}
    &\int_{I} |(u(x,y)-u(0,y_0))\cdot \xi| \, dy \\
    & \qquad = \int_{I} |u(x,y) \cdot \xi - u(0,y-x) \cdot \xi| \, dy +\int_{I} |(u(0,y-x) - u(0,y_0)) \cdot \xi| \, dy \\
    & \qquad \leq \int_{S_1} |\partial_{\xi} (u (s,t)\cdot \xi)| \, ds dt + c_7(|I|+x)^2\\
    & \qquad \leq \int_{S_1} \min_{\tilde{K} \in \tilde{\K}} |e(u)(s,t)\xi \cdot \xi - \tilde{K} \xi \cdot \xi| + c_6 \, ds dt  +c_7(|I|+x)^2 \\
    & \qquad \leq |S_1|^{1/2} \left( \int_{S_1} \operatorname{dist}^2(e(u)(s,t),\tilde{\mathcal{K}}) \, ds dt \right)^{1/2} + c_6 |S_1|  +c_7(|I|+x)^2 \\
    & \qquad \leq 4 x^{1/2} |I| \F(u)^{1/2} + c_6 x |I|  +c_7(|I|+x)^2.
 \end{split}
\end{align}
An analogous argument for $\eta = \begin{pmatrix} 1 \\ -1 \end{pmatrix}$ and $S_2$
yields
\begin{align}\label{eq: est ueta}
    \int_I |(u(x,y)-u(0,y_0)) \cdot \eta| \, dy \leq 4 x^{1/2} |I| \F(u)^{1/2} + c_6 x |I|  +c_7(|I|+x)^2.
\end{align}

Hence, combining \eqref{eq: est uxi} and \eqref{eq: est ueta} by using that $u_2 = \frac{1}{2}u \cdot (\xi-\eta)$, we find
\[
\int_I |(u_2(x,y)-u_2(0,y_0)| \, dy \leq 4 x^{1/2} |I| \F(u)^{1/2} + c_6 x |I|  +c_7(|I|+x)^2.
\]
Then, noticing that $c_7(|I|+x)^2\le\frac{c_5}{8}|I|^2$, arguing similarly as in \eqref{eq: lb NS triangle}, we obtain

\begin{align*}
\frac{c_5}{4} |I|^2 
&\leq \|w- \fint\limits_{I} w(y)+ u_2(x,y) dy - u_2(0,y_0) \|_{L^1(I)}\\
& \leq \|u_2(x, \cdot) - w- \fint\limits_{I} w(y)+ u_2(x,y) dy\|_{L^1(I)}
+ \|u_2(x, \cdot) - u_2(0,y_0)\|_{L^1(I)}\\
& \leq |I| \| \p_2 u_2 - w'\|_{L^1(I)} + 4 x^{\frac{1}{2}} |I|\F(u)^{1/2} + c_6 x |I| + \frac{c_5}{8}|I|^2\\
& \leq 4 |I|^2 \F(u;\{x\} \times (0,1))^{1/2} +  4 x^{\frac{1}{2}} |I|\F(u)^{1/2} + c_6 x |I| + \frac{c_5}{8}|I|^2
\end{align*}

As a consequence,
\begin{align}
\label{eq:ref-last-ineq}
\frac{c_5}{8}|I|^2 \leq 
4 |I|^2 \F(u;\{x\} \times (0,1))^{1/2} + c_6 x |I| + 4  x^{1/2} |I| \F(u)^{1/2}.
\end{align}
Finally, the claim follows by the same case distinction as in the nonlinear setting.
\end{proof}

\subsection{Extension to general domains}
\label{sec:trafo}

\label{sec:gen_domains}

In this section, we extend the previous results to the setting of domains with curved boundaries. Here it suffices to argue locally and to locally transform part of the boundary $\Gamma \subset \partial \Omega$ onto $\partial \R^2_+ := \{ x \in \R^2 : x_1=0  \}$. This is, for instance, possible by using boundary normal coordinates (see, for instance, \cite[Chapter 3.3]{FSU23}, \cite[Proposition 11.2.2.]{PSU23}, \cite[Section 2.1]{KKL01} or \cite[Lemma 3.15]{S13} for references on this).
We split our argument into two parts. First in Section \ref{sec:normal_coord} we recall the properties of boundary normal coordinates for the convenience of the reader. In Section \ref{sec:gen_domains_proofs} we then present the proofs of Theorems \ref{prop:gen_domain_nonlinear} and \ref{prop:gen_domain_lin}.

\subsubsection{Inverse boundary normal coordinates}
\label{sec:normal_coord}

Consider a bounded, connected set $D\subset\R^2$ with Lipschitz boundary and $\p D\ni0$.
Assume also that there exist $h\in C^{2,1}(\R;\R)$ with $h(0)=h'(0)=0$ and a sufficiently small constant $\rho>0$ such that
$$
D\cap Q_{2\rho}=\{(x,y) : y\in(-2\rho,2\rho), x>h(y)\},
$$
with the notation $Q_\ell:=\{(x,y) : |x|<\ell, |y|<\ell\}$.

We denote by $\nu_p$, $\tau_p$ and $\kappa_p$ the outer unit normal to $\p D$, the unit tangent (in counterclockwise orientation) to $\p D$ and the curvature of $\p D$ in $p\in\p D$, respectively. In particular, the assumptions on $h$ yield that $\nu_0=-e_1$ and $\tau_0=-e_2$.
According to the parametrization $y\mapsto(h(y),y)$ we recall that
$$
\nu_{(h(y),y)}=\frac{(-1,h'(y))}{\sqrt{1+(h'(y))^2}},
\quad
\tau_{(h(y),y)}=\frac{(-h'(y),-1)}{\sqrt{1+(h'(y))^2}},
\quad
\kappa_{(h(y),y)}=\frac{h''(y)}{1+(h'(y))^2} \,.
$$
We extend these three quantities onto $Q_\rho$ by denoting (with a slight abuse of notation) $\nu(y):=\nu_{(h(y),y)}$, $\tau(y):=\tau_{(h(y),y)}$ and $\kappa(y):=\kappa_{(h(y),y)}$.

We define the map $\Phi:Q_\rho\to\R^2$ given by
$$
\Phi(x,y)=(h(y),y)-x \, \nu(y).
$$
Notice that, by the condition $h(0)=h'(0)=0$ and by regularity, up to reducing the value of $\rho$, we can assume that $\Phi(Q_\rho)\subset Q_{2\rho}$.

The gradient of $\Phi$ reads
\begin{equation}\label{eq:jac-phi}
\nabla\Phi(x,y)=
\begin{pmatrix}
	-\nu_1(y) & h'(y)-x \, \p_y\nu_1(y) \\
	-\nu_2(y) & 1-x \, \p_y\nu_2(y)
\end{pmatrix}
=
\begin{pmatrix}
	-\nu_1(y) & \tau_1(y)(x\kappa(y)-\sqrt{1+(h'(y))^2}) \\
	-\nu_2(y) & \tau_2(y)(x\kappa(y)-\sqrt{1+(h'(y))^2})
\end{pmatrix}.
\end{equation}
In particular $\nabla\Phi(0,0)=Id$, so by the inverse function theorem, there exist two neighbourhoods of $0$, $U,V\subset\R^2$ such that $\Phi:U\to V$ is a $C^{1,1}$ diffeomorphism.

By construction, we additionally have that $\Phi(0)=0$ and that
$$
\Phi(U\cap\{(x,y)\in\R^2:x>0\})=D\cap V,
\quad
\Phi(U\cap\{(x,y)\in\R^2:x=0\})=\p D\cap V.
$$

\bigskip

We are now in position to define the diffeomorphism $F$ that will be used in the proofs of Theorems \ref{prop:gen_domain_nonlinear} and \ref{prop:gen_domain_lin}.
Here we will use the notation $\nu_{\p\Omega}(p)$ and $\tau_{\p\Omega}(p)$ to denote the unit normal vector and the tangent vector (in the counterclockwise orientation) to $\p\Omega$ in $p\in\p\Omega$.
Moreover, given $z\in\Omega$, we denote by $p_z\in\p\Omega$ the projection of $z$ onto $\p\Omega$, when it is single-valued.

\begin{lem}\label{lem:flat-F}
Let $\Omega$ be a Lipschitz, bounded, connected domain and let $\Gamma\subset\p\Omega$ be relatively open and $C^{2,1}$ regular and let $p_0\in\Gamma$.
Then, there exists a constant $r_0=r_0(\Gamma,\Omega)>0$ 
such that the following holds.
For every $0<r<r_0$ there exist a neighbourhood $\omega\ni p_0$ and a $C^{1,1}$-diffeomorphism $F:\omega\to Q_r$ such that $F(p_0)=(0,0)$,
\begin{equation}\label{eq:flat-F}
F(\omega\cap\Omega)=Q_r\cap\{(x,y)\in\R^2 : x>0\},
\quad
F(\omega\cap\Gamma)=Q_r\cap\{(x,y)\in\R^2 : x=0\},
\end{equation}
and such that $\nabla F(p_0)=R\in SO(2)$, with $R$ being the rotation such that $R\nu_{\p\Omega}(p_0)=-e_1$, $R\tau_{\p\Omega}(p_0)=-e_2$.

Moreover, there exist a constant $C=C(\Gamma)$ and a Lipschitz function $c:\omega\to\R$ such that
\begin{equation}\label{eq:F-cont}
\|\nabla F-R\|_{L^\infty(\omega)} \le Cr,
\end{equation}
and for every $z\in\omega$
\begin{equation}\label{eq:tangent-F}
-c(z)\tau_{\p\Omega}(p_z)=(\nabla F(z))^{-1}e_2
\quad \text{and} \quad
\|c(z)-1\|_{L^\infty(\omega)}\le (|\kappa(p_0)|+Cr)r.
\end{equation}
\end{lem}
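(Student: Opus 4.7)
The approach is to reduce the statement to the boundary normal coordinate setup of Section \ref{sec:normal_coord} via a rigid motion, and then define $F$ as the inverse of the map $\Phi$ constructed there. First, I would choose the rotation $R\in SO(2)$ prescribed by the lemma, i.e.\ the one with $R\nu_{\p\Omega}(p_0) = -e_1$ and $R\tau_{\p\Omega}(p_0) = -e_2$, and form the orientation-preserving isometry $T(z) := R(z-p_0)$. Setting $D := T(\Omega)$, the $C^{2,1}$-regularity of $\Gamma$ together with the particular choice of $R$ places $D$ in the framework of Section \ref{sec:normal_coord}: locally near $0$, $\p D$ is the graph of a $C^{2,1}$ function $h$ with $h(0) = h'(0) = 0$. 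I would then invoke the construction of that subsection to obtain the map $\Phi$, which by the inverse function theorem restricts to a $C^{1,1}$-diffeomorphism on some neighbourhood $U\ni 0$. Choosing $r_0>0$ small enough that $Q_{r_0}\subset U$, I would set $\omega := T^{-1}(\Phi(Q_r))$ and
\begin{equation*}
F := \Phi^{-1}\circ T : \omega \to Q_r.
\end{equation*}
The identities \eqref{eq:flat-F} and the normalization $F(p_0) = 0$ are then immediate consequences of the analogous properties of $\Phi$.

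For the quantitative claims I would work directly from the explicit formula \eqref{eq:jac-phi}. Evaluating at $(0,0)$ and using $h(0) = h'(0) = 0$ yields $\nabla\Phi(0,0) = \Id$, so the chain rule gives $\nabla F(p_0) = \nabla \Phi^{-1}(0) \cdot R = R$. On $Q_r$, the $C^{2,1}$-regularity of $h$ combined with \eqref{eq:jac-phi} produces $\|\nabla\Phi - \Id\|_{L^\infty(Q_r)} \leq Cr$ with $C=C(\Gamma)$; inverting via a Neumann series on a slightly smaller cube if necessary, and composing with $R$, transfers this to the estimate \eqref{eq:F-cont} for $\nabla F$.

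For the identity \eqref{eq:tangent-F}, the chain rule gives $(\nabla F(z))^{-1} = R^{-1}\nabla\Phi(F(z))$, so reading off the second column of \eqref{eq:jac-phi} with $(x,y):= F(z)$ produces
\begin{equation*}
(\nabla F(z))^{-1} e_2 \;=\; R^{-1}\tau(y)\bigl( x\kappa(y) - \sqrt{1 + (h'(y))^2}\bigr).
\end{equation*}
Because $T$ is an orientation-preserving isometry sending counterclockwise-oriented $\p\Omega$ onto counterclockwise-oriented $\p D$, and because by the geometric meaning of the normal coordinates the projection of $\Phi(x,y)$ onto $\p D$ equals $(h(y),y) = T(p_z)$, one has $R^{-1}\tau(y) = \tau_{\p\Omega}(p_z)$. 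Setting $c(z) := \sqrt{1 + (h'(y))^2} - x\kappa(y)$ (which is Lipschitz in $z$, since $F$ is $C^{1,1}$ and $h,\kappa$ have the required regularity) then yields the first part of \eqref{eq:tangent-F}. The pointwise bound $|c(z) - 1| \leq (|\kappa(p_0)| + Cr)r$ follows from $|h'(y)| \leq \|h''\|_{L^\infty}|y| \leq Cr$ on $Q_r$ (because $h'(0) = 0$) together with the Lipschitz estimate $|\kappa(y) - \kappa(p_0)| \leq Cr$.

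The main obstacle is not conceptual but bookkeeping: Section \ref{sec:normal_coord} already carries out the heavy lifting of constructing the normal coordinate map and controlling its derivative. The care required is to orient $T$ so that tangent vectors transform correctly at every point of $\omega$ (not merely at $p_0$), to identify $p_z$ as the $T$-preimage of $(h(y),y)$ with $(x,y) = F(z)$, and to ensure that the constants in \eqref{eq:F-cont}--\eqref{eq:tangent-F} remain uniform for $0 < r < r_0$ through the Neumann-series inversion of $\nabla\Phi$.
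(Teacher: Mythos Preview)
Your proof is correct and follows essentially the same route as the paper: you introduce the rigid motion $T(z)=R(z-p_0)$, apply the boundary normal coordinate construction of Section~\ref{sec:normal_coord} to $D=T(\Omega)$, and set $F=\Phi^{-1}\circ T$ on $\omega=T^{-1}(\Phi(Q_r))$, then read off \eqref{eq:F-cont} and \eqref{eq:tangent-F} from the explicit formula \eqref{eq:jac-phi}. The only cosmetic differences are that the paper bounds $|c(z)-1|$ via the Lipschitz estimate $|\nabla\tilde c|\le |\kappa(y)|+Cr$ applied to $\tilde c(x,y):=\sqrt{1+(h'(y))^2}-x\kappa(y)$, whereas you estimate the two terms separately; both arguments are equivalent.
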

\begin{proof}
Given $R$ as in the statement, let $D:=R(\Omega-p_0)$ and let $\Phi:U\to V$ be defined as above.
Let $r_0:=\sup\{\ell>0 : Q_\ell\subset U\}$.
Since $U$ is a neighbourhood of $0$, $r_0=r_0(\Gamma,\Omega)>0$.

Given $r$ as in the statement, we define $\omega:=R^t\Phi(Q_r)+p_0$ and $F:\omega\to Q_r$ as
$$
F(z):=\Phi^{-1}_{|Q_r}(R(z-p_0)), \quad \text{for every } r<r_0.
$$
It now remains to prove that this map satisfies the properties claimed in the statement.

The facts that $F$ is a $C^{1,1}$-diffeomerphism, $p_0\in\omega$, $F(p_0)=(0,0)$, \eqref{eq:flat-F}, that $\nabla F(p_0)=R$ and \eqref{eq:F-cont} are immediate by definition of $F$ and the properties of $\Phi$.

Lastly, denoting $(x,y)=F(z)$ and given that $(\nabla F(z))^{-1}=R^t\nabla\Phi(x,y)$, by \eqref{eq:jac-phi} we infer that
$$
(\nabla F(z))^{-1}e_2 = -\tilde c(x,y) R^t \tau(x,y) = \tilde c(F^{-1}(z)) \tau_{\p\Omega}(p_z).
$$
where we defined $\tilde c(x,y):=\sqrt{1+(h'(y))^2}-x\kappa(y)$ and we have used the fact that $\tau(x,y)=R \tau_{\p\Omega}(p_z)$, which follows by the definition of $D$ and of $\tau$.
Denoting $c:=\tilde c \circ F^{-1}$, \eqref{eq:tangent-F} follows since $\tilde c(0)=1$, it is Lipschitz continuous and $|\nabla\tilde c(x,y)|\le |\kappa(y)|+Cr$.
\end{proof}

In concluding this section, we collect some immediate consequences of Lemma \ref{lem:flat-F} in the following remark. We will use these in the subsequent section.

\begin{rmk}
We remark that the following more precise results hold:
\begin{itemize}
\item[(i)] As can be seen from the proof of Lemma \ref{lem:flat-F}, the maps $F$ depending on $r$ are obtained as restriction of the same map $F:\omega_0\to Q_{r_0}$ on the sets $\omega$, where we set $\omega_0:=R^t\Phi(Q_{r_0})+p_0$ for $r_0>0$ as in the proof of Lemma \ref{lem:flat-F} and recall that $\omega:=R^t\Phi(Q_r)+p_0$ for $r\in (0,r_0)$. 
\item[(ii)] If $\Gamma$ is a segment, then we can choose $F$ to be a rigid motion and \eqref{eq:tangent-F} holds true for $c\equiv 1$.
\item[(iii)] By expanding the determinant and by the properties of the Frobenius norm, direct implications of \eqref{eq:F-cont} are that
\begin{equation}\label{eq:jacobian}
\|\det(\nabla F)-1\|_{L^\infty(\omega)}\le Cr,
\quad
\|\det(\nabla F^{-1})-1\|_{L^\infty(Q_r)}\le Cr
\end{equation}
and that
\begin{equation}\label{eq:norm-inv}
\|\nabla F^{-1}-R^t\|_{L^\infty(Q_r)}\le Cr
\end{equation}
by possibly enlarging the constant $C$, e.g., by multiplying it by a universal constant.
\end{itemize}
\end{rmk}

\subsubsection{Proofs of Theorems \ref{prop:gen_domain_nonlinear} and \ref{prop:gen_domain_lin}}
\label{sec:gen_domains_proofs}

In this section we make use of the boundary normal coordinates discussed in the previous section and present the proofs of Theorems \ref{prop:gen_domain_nonlinear} and \ref{prop:gen_domain_lin}. In order to obtain the desired extension to arbitrary domains for the geometrically nonlinear setting, we directly reduce the argument to the normalized setting by transforming a sufficiently small coordinate patch into the unit square and invoking Proposition \ref{prop:ref_domain}(a). The proof of Theorem \ref{prop:gen_domain_lin} is slightly more involved. Due to the more complicated transformation behaviour of the symmetrized gradient, we first invoke Korn's inequality to obtain a gradient bound and then adapt the proof of Proposition \ref{prop:ref_domain}(b) into this setting.

\begin{proof}[Proof of Theorem \ref{prop:gen_domain_nonlinear}]
The statement follows directly from the result for the square domain after a change of variables using boundary normal coordinates (locally) mapping the boundary $\Gamma$ to the line $\{x=0\} \subset \R^2$. More precisely, this follows by the transformation behaviour of the gradient and the tangent vectors to $\Gamma$.

Indeed, let $p_0\in\Gamma$.
For a small parameter $0<r<r_0(\Gamma,\Omega)$ to be fixed later let $\omega\subset\Omega$ and $F:\omega\to Q_r$ be given by Lemma \ref{lem:flat-F}.

With this notation fixed, let $u: \Omega \rightarrow \R^2$ be as in the statement of Theorem \ref{prop:gen_domain_nonlinear} 
and let $v:Q_r\to\R^2$ be defined as $v(y):=R \, u(F^{-1}(y))$, where $R=\nabla F(p_0)$. Then,
\begin{align}
\label{eq:trafo_domain_elast}
\begin{split}
\int\limits_{\Omega} \dist^2(\nabla u(x) , \K) dx
& \geq \int\limits_{\omega} \dist^2(\nabla u(x) , \K) dx
 = \int\limits_{\omega} \dist^2((\nabla u( F^{-1}(F(x))), \K) dx \\
&= \int\limits_{Q_r}\dist^2(\nabla u|_{F^{-1}(y)},\K) |\det(\nabla F^{-1}(y))| dy \\
&=  \int\limits_{Q_r}\dist^2(R^{t} \nabla_y v(y) (\nabla F|_{F^{-1}(y)}),\K) |\det(\nabla F^{-1}(y))| dy\\
&\ge \frac{1}{2}\int\limits_{Q_r}\dist^2(\nabla v(y),R\K(\nabla F|_{F^{-1}(y)})^{-1}) |\det(\nabla F^{-1}(y))| dy \\
& = \frac{1}{2}\int\limits_{Q_r}\dist^2(\nabla v(y),\hat{\mathcal{K}}(y)) |\det(\nabla F^{-1}(y))| dy,
\end{split}
\end{align}
where $\hat{\mathcal{K}}(y):=R\K(\nabla F|_{F^{-1}(y)})^{-1}$ and $r>0$ is small enough.
Let us now write  $\hat{\mathcal{K}}:=\bigcup_{y\in Q_r}\hat{\mathcal{K}}(y)$ and $\tilde v(z):= r^{-1} v_{|(0,r)^2}(rz)$.
We will apply Proposition \ref{prop:gen_domain_nonlinear} to $\tilde{v}$ and $\hat \K$.
First, it follows by the estimate \eqref{eq:trafo_domain_elast} above and \eqref{eq:jacobian} that there exists $c_{\Gamma}>0$ a constant depending on $\Gamma$ such that 
\begin{align}\label{eq:elastic-F}
\int\limits_{\Omega} \dist^2(\nabla u , \K) dx
\geq c_{\Gamma} r^2 \int\limits_{(0,1)^2} \dist^2(\nabla \tilde{v}, \hat\K) dx.
\end{align}
Next, using the transformation behaviour of tangent vectors \eqref{eq:tangent-F}, assumption \eqref{eq:dist} turns into 
\begin{align*}
\min\limits_{\hat{K} \in \hat{\mathcal{K}}} |\hat{K}e_2 - e_2| 
= \min\limits_{x\in F^{-1}(Q_r),\, K \in \mathcal{K}}|c(x)K \tau(p_x)-\tau(p_0)|,
\end{align*}
Moreover, for every $x\in \omega$ by \eqref{eq:norm-inv} and \eqref{eq:tangent-F} we have
\begin{align*}
	|c(x)K\tau(p_x)-\tau(p_0)| &\ge \big|c(x)|K\tau(p_x)-\tau(p_x)|-\|(\nabla F)^{-1}-R^t\|_{L^\infty(Q_r)}\big| \\
	&\ge (1-Cr)|K\tau(p_x)-\tau(p_x)|- Cr \ge \frac{d}{2},
\end{align*}
up to reducing the size of $r$. 
Thus we have recovered the lower bound control for the set $\hat{\mathcal{K}}$ assumed in Proposition \ref{prop:ref_domain} ($1$) for $\frac{d}{2}$ (in place of $d$).

To apply Proposition \ref{prop:ref_domain} (with $d'=\frac{d}{2}$ in place of $d$) we further need to check the boundary condition \eqref{eq:trace_nonlin}.
Let $I\subset(0,1)$ be an open interval and let $\Gamma':=\{F^{-1}(0,rs): s\in I\}\subset\Gamma$, which is relatively open and connected by the regularity of $F$.
By definition of $\tilde v$, for every $s\in I$ we have $\tilde v(0,s)=r^{-1}Ru(F^{-1}(0,rs))$.
Writing $x=F^{-1}(0,rs)$ we also have $(0,s)=r^{-1}F(x).$ From \eqref{eq:tangent-F} and by definition of $R$, for every $s_1,s_2\in I$ we have
$$
|(\tilde v(0,s_1)-(0,s_1))-(\tilde v(0,s_2)-(0,s_2))|=r^{-1}|(Ru(x_1)-F(x_1))-(Ru(x_2)-F(x_2))|,
$$
where we denoted $x_i=F^{-1}(0,rs_i).$
By \eqref{eq:F-cont} we have that
$$
|(F(x_1)-Rx_1)-(F(x_2)-Rx_2)|\le Cr|x_1-x_2|, 
$$
hence, the triangle inequality yields
$$
|(\tilde v(0,s_1)-(0,s_1))-(\tilde v(0,s_2)-(0,s_2))|\le r^{-1}|(u(x_1)-x_1)-(u(x_2)-x_2)|+C|x_1-x_2|.
$$
Noticing that (up to reducing the value of $r$) $|x_1-x_2|\le 2\mathcal{H}^1(\Gamma')$ and that, by the area formula and \eqref{eq:tangent-F} $\big|\mathcal{H}^1(\Gamma')-r|I|\big|\le 2Cr^2|I|$, condition \eqref{eq:bc-nonlin-gen} implies that
\begin{align*}
\esup_{s,s'\in I}|(\tilde v(0,s)-(0,s))-(\tilde v(0,s')-(0,s'))|
& \le\Big(\frac{d}{208}r^{-1}+2C\Big)\mathcal{H}^{1}(\Gamma')\\
& \le\frac{d}{208}(1+C_1r)|I|,
\end{align*}
for some $C_1>0$ depending on $C$ and $d$.
Thus,
the boundary condition for $\tilde v$ follows for $r$ sufficiently small.

We now deal with the surface energy contribution.
By definition of $v$ and by the chain rule we have $\nabla v(y)=R\nabla u_{|F^{-1}(y)}\nabla F^{-1}(y)$.

We first observe that the minimization problem for $E_{\epsilon}$ can be reduced to functions such that $\|\nabla u\|_{L^2(\Omega)}\le M_0$ for some $M_0>0$ depending only on $\Omega$ and $\K$.
Indeed, since $E_\epsilon(0)\le |\Omega|\max_{M\in\K}|M|^2:=M_1$ we can reduce to functions $u$ such that $E_\epsilon(u)\le M_1$. Hence, integrating the pointwise estimate
$$
|\nabla u(x)|^2 \le 2\dist^2(\nabla u(x),\K)+2\max_{M\in\K}|M|^2, \quad \text{for almost every } x\in\Omega,
$$
we get that $\|\nabla u\|_{L^2(\Omega)}\le 2M_1+2\max_{M\in\K}|M|^2:=M_0$.

By approximation (see e.g. \cite[Theorem 3.9]{AFP2000}) $\nabla v\in BV(Q_r;\R^{2\times2})$ and 
$$
|D^2v|(Q_r)\le C_1(|D^2u|(\omega)+\|\nabla u\|_{L^1(\omega)}).
$$
which yields, after a rescaling and an application of H{\"o}lder's inequality, that
\begin{equation}\label{eq:surface-F}
|D^2\tilde v|((0,1)^2)\le \frac{1}{r} C_1(|D^2u|(\omega)+r),
\end{equation}
for some $C_1>0$ and $r$ sufficiently small but fixed.

Combining \eqref{eq:elastic-F} and \eqref{eq:surface-F} and choosing $r>0$ sufficiently small, we eventually obtain
$$
E_\epsilon(u) \ge c_\Gamma r^2 \int_{(0,1)^2}\dist^2(\nabla\tilde v,\hat\K)dy + C_2 r \ \epsilon|D^2\tv|((0,1)^2) - r \ \epsilon,
$$
for some $C_2>0$ depending on $\Omega$ and $\K$.

Now, let $0< \epsilon_0 < 1$ be small enough such that for all $0 < \epsilon \leq \epsilon_0$ it holds
\[
\min\{  C_2 r \epsilon (|\log C_2r\epsilon| + 1 ), 1 \} - r \epsilon \geq C_3 \min\{ \epsilon  (|\log \epsilon| + 1 ), 1 \},
\]
with $C_3=\frac{1}{2}\min\{C_2r|\log(C_2r)|,1\}$.
Applying Proposition \ref{prop:ref_domain} for $0< \epsilon \leq \epsilon_0 < 1$ we then obtain for a universal $C_0 > 0$
\begin{align*}
E_\epsilon(u) & \ge c_\Gamma r^2 \int_{(0,1)^2}\dist^2(\nabla\tilde v,\hat\K)dy + C_2 r \ \epsilon|D^2\tv|((0,1)^2) - r \ \epsilon \\ 
& \ge C_0 \min\{\epsilon C_2r(|\log\epsilon C_2 r|+1),1\} - r \epsilon \geq  C_0 C_3 \min\{\epsilon(|\log\epsilon |+1),1\},
\end{align*}
up to reducing the value of $\epsilon_0$ if needed.
On the other hand, for $\epsilon > \epsilon_0$ it follows for a universal $C_0' > 0$
\[
E_{\epsilon}(u) \geq E_{\epsilon_0}(u) \geq C_0C_3 \min\{\epsilon_0 (|\log\epsilon_0|+1),1\} \geq C_0' \min\{\epsilon(|\log\epsilon|+1),1\}.
\]
\end{proof}

The proof of Theorem \ref{prop:gen_domain_lin} provided below is an adaptation of the arguments given for the reference domain (cf.\ Proposition \ref{prop:ref_domain}(2)).
Following the strategy of the geometrically nonlinear setting in which one replaces Euclidean coordinates with boundary normal coordinates gives rise to an additional difficulty: since we are controlling the gradient along curves (instead of straight lines) we need to control the full gradient in terms of the energy $F_\epsilon$.
This can be achieved by the following Korn-type inequality, see, e.g.,~\cite{Pompe}.

\begin{prop}\label{prop: Korn}
    Let $U \subseteq \R^d $ be an open, connected and bounded set with Lipschitz boundary. Moreoever, let $\Gamma \subseteq \partial \Omega$ with $\mathcal{H}^{d-1}(\Gamma) > 0$. Then there exists a constant $C>0$ such that for every $v \in W^{1,2}(U;\R^d)$ it holds
    \[
    \int_{U} |\nabla v|^2 \, dx \leq C \left( \int_U |e(v)|^2 \, dx + \int_{\Gamma} |v|^2 \, d \mathcal{H}^{d-1} \right).
    \]
\end{prop}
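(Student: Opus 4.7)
The plan is to deduce this inequality by a contradiction and compactness argument, using the classical Korn second inequality as a black box: on a bounded Lipschitz domain $U$ one has $\|\nabla v\|_{L^2(U)} \le C(\|e(v)\|_{L^2(U)} + \|v\|_{L^2(U)})$ for every $v \in W^{1,2}(U;\R^d)$. Suppose for contradiction that the claimed inequality fails. By scaling homogeneity I extract a sequence $\{v_n\} \subset W^{1,2}(U;\R^d)$ with $\|v_n\|_{W^{1,2}(U)}=1$ and $\|e(v_n)\|_{L^2(U)}^2 + \|v_n\|_{L^2(\Gamma)}^2 \le \frac{1}{n}$. Since $U$ is bounded and Lipschitz, the compact embedding $W^{1,2}(U;\R^d)\hookrightarrow L^2(U;\R^d)$ yields, up to a subsequence, $v_n \to v$ strongly in $L^2(U)$ and $v_n \rightharpoonup v$ weakly in $W^{1,2}(U)$. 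Weak lower semicontinuity together with $e(v_n)\to 0$ in $L^2(U)$ gives $e(v)=0$, so that $v(x) = b + Ax$ with $A$ skew-symmetric. Continuity of the trace operator, combined with the weak convergence $v_n|_\Gamma \rightharpoonup v|_\Gamma$ in $L^2(\Gamma)$ and the strong convergence $v_n|_\Gamma \to 0$ in $L^2(\Gamma)$, forces $v|_\Gamma =0$ in $L^2(\Gamma)$.

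Next, I would rule out nontrivial affine rigid motions by exploiting the rank structure of skew-symmetric matrices. If $A\ne 0$, then $\rank(A) \ge 2$ (since $A^t=-A$), and hence $\{x : v(x)=0\}$ is contained in an affine subspace of dimension at most $d-2$, which carries zero $(d-1)$-dimensional Hausdorff measure. This is incompatible with $\Gamma\subseteq\{v=0\}$ and $\mathcal{H}^{d-1}(\Gamma)>0$. Therefore $A=0$, and then $v\equiv b$ with $v|_\Gamma=0$ in $L^2(\Gamma)$ yields $b=0$, i.e., $v\equiv 0$.

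To close the argument I apply Korn's second inequality to the differences $v_n - v_m$,
\[
\|\nabla(v_n-v_m)\|_{L^2(U)} \le C\bigl(\|e(v_n-v_m)\|_{L^2(U)} + \|v_n-v_m\|_{L^2(U)}\bigr),
\]
and observe that the right-hand side vanishes as $n,m\to\infty$, since $e(v_n)\to 0$ in $L^2(U)$ and $v_n\to 0$ strongly in $L^2(U)$. Thus $\{v_n\}$ is Cauchy, and hence convergent to $0$, in $W^{1,2}(U)$, contradicting $\|v_n\|_{W^{1,2}(U)}=1$. The delicate point is the identification of the weak limit together with the rank-based removal of residual rigid motions, which is precisely where the positivity assumption $\mathcal{H}^{d-1}(\Gamma)>0$ enters in an essential way.
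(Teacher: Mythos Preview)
Your argument is correct. The paper does not actually prove this proposition: it is stated as a known result and attributed to the literature (the reference \cite{Pompe}), so there is no ``paper's own proof'' to compare against. What you have written is the standard compactness--contradiction derivation of Korn's inequality with partial boundary data from Korn's second inequality, and all steps are sound: the normalization, the Rellich compactness, the identification of the weak limit as an infinitesimal rigid motion via $e(v)=0$ on the connected domain $U$, and the use of $\mathcal{H}^{d-1}(\Gamma)>0$ together with the rank structure of skew-symmetric matrices to kill the residual rigid motion.

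Two minor remarks. First, the final Cauchy step is unnecessarily indirect: once you know $v_n\to 0$ strongly in $L^2(U)$ and $e(v_n)\to 0$ in $L^2(U)$, applying Korn's second inequality directly to $v_n$ (rather than to differences $v_n-v_m$) already gives $\nabla v_n\to 0$ in $L^2(U)$ and hence $v_n\to 0$ in $W^{1,2}(U)$, contradicting $\|v_n\|_{W^{1,2}}=1$. Second, note the obvious typo in the statement ($\Gamma\subseteq\partial\Omega$ should read $\Gamma\subseteq\partial U$); your proof correctly interprets it this way.
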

\begin{rmk}
    Given $U \subseteq \R^d$ as above define for $h>0$ the sets $U_h = h U$, and $\Gamma_h = h \Gamma$. Note that by scaling there exists $C>0$ such that it holds for all $h > 0$ and $v \in W^{1,2}(U_h; \R^d)$ that
    \[
    \int_{U_h} |\nabla v|^2 \, dx \leq C \left( \int_{U_h} |e(v)|^2 \, dx + \frac1h \int_{\Gamma_h} |v|^2 \right).
    \]
\end{rmk}

\begin{proof}[Proof of Theorem \ref{prop:gen_domain_lin}]
We proceed with a similar change of coordinates as that in the geometrically nonlinear case. 
Let $p_0\in\Gamma$.
For a small parameter $0<r<r_0(\Gamma,\Omega)$ to be fixed later, let $\omega\subset\Omega$ and $F:\omega\to Q_r$ be given by Lemma \ref{lem:flat-F}.
Let $u:\Omega\to\R^2$ be as in the statement of Theorem \ref{prop:gen_domain_lin}.
We denote by $v:Q_r \to \R^2$ the function defined as $v(y):=R(u(F^{-1}(y)))$.
By the chain rule we have
\begin{align*}
	\nabla v(y) &= R\nabla u|_{F^{-1}(y)}\nabla F^{-1}(y).
\end{align*}
As a final adjustment, we define $\tilde v(z):=r^{-1}v_{|(0,r)^2}(rz)$, so that $\tilde v$ is defined in the reference square $(0,1)^2$.
For notational convenience we set
$$
\tilde F_\epsilon(\tilde v):=\int_{(0,1)^2} \dist^2((R^t\nabla\tilde v(z)\nabla F_{|F^{-1}(rz)})^{sym},\tilde\K)dz+\epsilon |D^2\tilde v|((0,1)^2),
$$
and extend this notation to its corresponding localizations.

We now argue in three steps.
First, we will provide a lower bound control of the type $F_\epsilon(u) \ge c_\Gamma \tilde F_\epsilon(\tilde v) - C_{\Gamma} \epsilon$.
Secondly, we will retrace the proof of Proposition \ref{prop:ref_domain}(2), for the (more general) energy $\tilde F_\epsilon$, which will then, in the last step, provide the desired lower scaling bound using an argument similar to the one at the end of the proof of Theorem \ref{prop:gen_domain_nonlinear}.

\smallskip

\emph{Step 1: change of variables.} We notice that
\begin{equation*}
\begin{split}
\int_\Omega \dist^2(e(u(x)),\tilde \K)dx &\ge \int_{\omega} \dist^2(e(u(x)),\tilde\K)dx \\
&= \int_{\omega} \dist^2((R^t\nabla v_{|F(x)}\nabla F(x))^{sym},\tilde\K)dx \\
&= \int_{Q_r} \dist^2((R^t\nabla v(y)\nabla F_{|F^{-1}(y)})^{sym},\tilde\K)|\det(\nabla F^{-1}(y))|dy \\
&\ge \frac{1}{2}\int_{Q_r} \dist^2((R^t\nabla v(y)\nabla F_{|F^{-1}(y)})^{sym},\tilde\K)dy,
\end{split}
\end{equation*}
with the last inequality being a consequence of \eqref{eq:jacobian} for $r>0$ sufficiently small.
By a scaling argument we also get
\begin{equation}\label{eq:lin1proof}
\int_\Omega \dist^2(e(u(x)), \tilde \K)dx \ge \frac{r^2}{2}\int_{(0,1)^2} \dist^2((R^t\nabla\tilde v(z)\nabla F_{|F^{-1}(rz)})^{sym},\tilde\K)\,dz.
\end{equation}
Gathering \eqref{eq:lin1proof} and \eqref{eq:surface-F} we obtain
$$
F_\epsilon(u) \ge  c_\Gamma \tilde F_\epsilon(\tilde v)- C_{\Gamma} \epsilon,
$$
for some $C_{\Gamma}, c_\Gamma>0$.

Additionally, by arguing analogously as in the proof of Theorem \ref{prop:gen_domain_nonlinear}, the boundary condition \eqref{eq:bc-lin-gen} turns into the following estimate for $\tilde v$ 
\begin{align}
\label{eq:bc-change-var}
\begin{split}
\esup_{s_1,s_2\in\tilde I}|\tilde v(0,s_1)-\tilde v(0,s_2)|
& \le\frac{d}{64\cdot17\cdot9\cdot\sqrt{2}}r^{-1}\mathcal{H}^1(\tilde \Gamma)\le\Big(\frac{d}{64\cdot17\cdot9\cdot\sqrt{2}}+C_1r\Big)|\tilde I|\\
& \le\frac{d}{32\cdot17\cdot9\cdot\sqrt{2}}|\tilde I|
\end{split}
\end{align}
for every open interval $\tilde I\subset(0,1)$ up to reducing the value of $r$. Here we have used the notation $\tilde\Gamma:=\{F^{-1}(0,rs)\in\Gamma : s\in\tilde I\}$ and $C_1>0$ is a constant depending on $\Gamma$.
In the following we will prove a lower bound of the form $\tilde{F}_{\epsilon}(\tilde v) \geq c \min\{ \epsilon(|\log \epsilon| + 1) ,1\}$. We start again with a version of the lower bound on slices $\{t\} \times (0,1)$. 
\smallskip

\emph{Step 2: lower bound for $\tilde F_\epsilon$.}
We consider the working assumption that $\tilde F_\epsilon(\tv)\le c_0\min\{ \epsilon(|\log \epsilon| + 1) ,1\}$ for some $c_0>0$.
This can always be assumed without loss of generality since otherwise there is nothing to prove.
We define the constants $c_2:=\frac{d}{17}$, $c_3:=2\max_{\tilde{K}\in\tilde\K}|\tilde{K}|$, $c_1=\frac{1}{2}+16\frac{c_3}{c_2}$ and $c_4=\frac{c_2 c_1^2}{32\sqrt{2}(c_1+1)^2}$.
We notice that $c_4\ge \frac{d}{32\cdot17\cdot9\cdot\sqrt{2}}$.
The remainder of this step is devoted to proving the following claim.

\bigskip

\underline{\textit{Claim:}} There exists $c>0$ such that for all $t \in (\frac{1}{4c_1} \min\{ 1, \epsilon (|\log \epsilon| + 1) \},\frac1{2c_1})$, $\tv\in W^{1,2}((0,1)^2;\R^2)$ satisfying \eqref{eq:bc-change-var}  and all $\epsilon > 0$  at least one of the following conditions holds
\begin{equation}\label{eq: est claim rotated}
\tilde F_\epsilon(\tv;\{t\} \times (0,1)) \geq c \min\Big\{ \frac{\epsilon}t, 1 \Big\} \text{ or } |D^2 \tv|((t/2,t) \times (0,1)) \geq c.
\end{equation}

\begin{figure}
\includegraphics{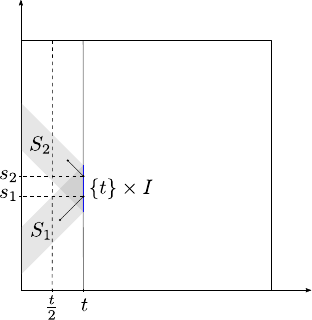}
\caption{The figure depicts the interval  $\{t\}\times I$ (in blue), the stripes in which the energy is controlled $S_1, S_2$ and the points in which the gradient is controlled $(t,s_1), (t,s_2)$.}
\label{fig:slices}
\end{figure}

\bigskip
Let $t\in(\frac{1}{4c_1}\min\{1,\epsilon(|\log\epsilon|+1)\},\frac1{2c_1})$. 
Moreover, we may further assume that $|D^2 \tv|((t/2,t) \times (0,1)) \leq \frac1{40}$.
Let $I\subset(\frac{1}{4},\frac{3}{4})$ be an interval chosen such that $|I|=c_1t$ and for $i=1,2$
\begin{equation}\label{eq: choice slice}
\tilde F_\epsilon(\tv;\{t\}\times I) \le 16|I|\tilde F_\epsilon(\tv;\{t\}\times(0,1)), \quad
\tilde F_\epsilon(\tv;S_i) \le 16|I|\tilde F_\epsilon(\tv) \quad \text{ and } \quad |D^2 \tv|(R_i) \leq  |I|, 
\end{equation}
where $S_1:=\{(t-\xi,s-\xi):s\in I, 0<\xi<t\}$, $S_2:=\{(t-\xi,s+\xi):s\in I, 0<\xi<t\}$  and  $R_i := S_i\cap \left((\frac{t}{2},t)\times(0,1) \right)$, see Figure \ref{fig:slices}.

Let $s_{0,i} := \min\{\eta>0 : (\xi,\eta)\in S_i\}$. By Korn's inequality with boundary conditions, Proposition \ref{prop: Korn}, for $U=t^{-1} \left(S_i - (0,s_{0,i})\right)\subset (0,1)\times(0,c_1+1)$ and $h=t$ we obtain
\begin{align}
\label{eq:Korn1}
\int_{S_i}|\nabla\tv(y)|^2dy \le C_0\Big(\int_{S_i}|(R^t\nabla\tv(y) R)^{sym}|^2dy+t^{-1}\int_{I_i}\Big|\tv(0,s)-\fint_{I_i}\tv(0,\cdot)\Big|^2ds\Big)
\end{align}
with $I_i:=\partial S_i\cap\big(\{0\}\times(0,1)\big)$. 
We stress the fact that, since $t^{-1} \left(S_i - (0,s_{0,i})\right)$ is a fixed domain (independent of $t$) the constant $C_0$ above depends only on the constant $c_1$ and on $F$, hence only on $\Gamma$, $\Omega$ and $\tilde \K$.
By summing and subtracting the term $(R^t\nabla\tv(y)\nabla F_{|F^{-1}(ry)})^{sym}$ inside the modulus in the first term of the right-hand side of \eqref{eq:Korn1} we obtain
\begin{align}
\label{eq:Korn_apply}
\begin{split}
&\int_{S_i}|\nabla\tv(y)|^2dy \le C_0\Big(\int_{S_i}|(R^t\nabla\tv(y)\nabF)^{sym}|^2dy \\
& \quad+ \int_{S_i}|R^t\nabla\tv(y)(R-\nabF)|^2dy + t^{-1}\int_{I_i}\Big|\tv(0,s)-\fint_{I_i}\tv(0,\cdot)\Big|^2ds\Big).
\end{split}
\end{align}
Noticing that $ry\in Q_r$ and that $R:= \nabla F(p_0)$ with $p_0 = F(0)$, since $\nabla F\in C^{0,1}(\omega)$ (with $\omega \subset \Omega$ as in Lemma \ref{lem:flat-F}) we have that $|R-\nabF| = |\nabla F|_{F^{-1}(0)}- \nabF|\le Cr$ for every $y\in S_i$ and $i\in\{1,2\}$.
Thus, up to possibly reducing the value of $r$, we can absorb the first term in the second line of \eqref{eq:Korn_apply} into the left-hand side of \eqref{eq:Korn_apply}.
This leads to
$$
\int_{S_i}|\nabla\tv(y)|^2dy \le 2C_0\Big(\int_{S_i}|(R^t\nabla\tv(y)\nabF)^{sym}|^2dy+t^{-1}\int_{I_i}\Big|\tv(0,s)-\fint_{I_i}\tv(0,\cdot)\Big|^2ds\Big).
$$
Let $\tilde K\in L^\infty(S_i;\tilde\K)$ be such that
$$
|(R^t\nabla\tv(y)\nabF)^{sym}-\tilde K(y)|=\dist((R^t\nabla\tv(y)\nabF)^{sym},\tilde\K).
$$
Recalling that $c_4\ge\frac{d}{32\cdot17\cdot9\cdot\sqrt{2}}$, \eqref{eq:bc-change-var} and the fact that $\tilde F_\epsilon(\tv)\le c_0 4 c_1 t=4c_0|I|$ we infer 
\begin{equation}\label{eq:L2-v}
\begin{split}
\int_{S_i}|\nabla\tv(y)|^2dy &\le 2C_0\Big(2\int_{S_i}|(R^t\nabla\tv(y)\nabF)^{sym}-\tilde K(y)|^2dy \\
& \qquad\qquad\qquad +2\int_{S_i}|\tilde K(y)|^2dy+t^{-1}\int_{I_i}\Big|\tv(0,s)-\fint_{I_i}\tv(0,\cdot)\Big|^2ds\Big) \\
&\le 2C_0\Big(32|I|\tilde F_\epsilon(v)+2c_1^{-1}c_3^2|I|^2+c_1c_4^2|I|^2\Big) \\
&\le 2C_0(128 c_0+2c_1^{-1}c_3^2+c_1c_4^2)|I|^2=: d_1 |I|^2.
\end{split}
\end{equation}
Thus, using \eqref{eq: choice slice} there exist $(\tilde t_i,\tilde s_i) \in S_i$ with $\tilde t_i > t/2$, such that 
\begin{equation}\label{eq:grad-contr}
|\nabla \tv(\tilde t_i,\tilde s_i)| \le (3 c_1 d_1)^\frac{1}{2}  \quad \text{ and } \quad | (\nabla \tv((0,\tilde s_i+(-1)^i\tilde t_i) + \cdot (1,1))'|((t/2,t)) \leq 3.  
\end{equation}
Denoting $s_i:=\tilde s_i+(-1)^i\tilde t_i+t$ 
it follows that 
\begin{align}
\label{eq:grad_comb}
|\nabla \tv(t,s_i)| \leq (3c_1d_1)^{1/2} + 3,
\end{align}
and by definition of $S_i$ it holds $(t,s_i) \in \{t\} \times I$, see again Figure \ref{fig:slices}.

Analogously as in the proof of Proposition \ref{prop:ref_domain}(2), it suffices to prove the claim under the smallness assumptions that 
\begin{align}
\label{eq:small}
|\p_2\nabla\tv(t,\cdot)|(I)<c_2 \mbox{ and } \dist((R^t\nabla tv(y)\nabla F_{|F^{-1}(ry)})^{sym},\tilde\K)\le 3c_2 \text{ for } \mathcal{H}^1\text{-a.e. }y\in\{t\}\times I.
\end{align}
We first consider the case $i=1$ with the other case being completely analogous.
By condition \eqref{eq:dist-lin} and property \eqref{eq:tangent-F}, setting $y_s:=(t,s_1)$ and $x_s:=F^{-1}(ry_s)$, we obtain
\begin{align*}
d &\le |\tau(p_{x_s})\cdot\tilde K(y_s)\tau(p_{x_s})| \\
&\le |\tau(p_{x_s})\cdot(\tilde K(y_s)-(R^t\nabla\tv(y_s)\nabla F(x_s)))\tau(p_{x_s})| + |\tau(p_{x_s})\cdot(R^t\nabla\tv(y_s)\nabla F(x_s))\tau(p_{x_s})| \\
&\leq \dist((R^t\nabla\tv(y_s)\nabla F(x_s))^{sym},\tilde\K)+c(x_s)^{-1}|(R\tau(p_{x_s}))\cdot\p_2 \tv(y_s)|.
\end{align*}

Adding and subtracting (inside the modulus of the last term on the right-hand side above) $(\p_2\tv_2(y_s)-\p_2\tv_2(y))$ for a generic $y\in\{t\}\times I$, denoting $x:=F^{-1}(ry)$ and recalling \eqref{eq:grad_comb}, up to reducing the value of $r$, we arrive at
\begin{align*}
\frac{d}{2} &\le \frac{1}{2}\dist((R^t\nabla\tv(y_s)\nabla F(x_s))^{sym},\tilde\K) \\
& \qquad +|((R-\nabla F(x_s))\tau(p_{x_s}))\cdot\p_2\tv(y_s)|+|\p_2\tv_2(y_s)-\p_2\tv_2(y)|+|\p_2\tv_2(y)| \\
&\le \frac{3}{2}c_2+|\nabla\tv(y_s)|\|R-\nabla F\|_{L^\infty(\omega)}+c_2+|\p_2\tv_2(y)| \\
&\le \frac{5}{2}c_2+((3c_1d_1)^\frac{1}{2}+3)Cr+|\p_2\tv_2(y)|.
\end{align*}
By choosing $r$ small enough and by the choice of the constants we conclude that $|\p_2\tv_2(y)|\ge5c_2$ for every $y\in\{t\}\times I$.
As, by assumption, $|\p_2\nabla\tv(t,\cdot)|(I)<c_2$, we may assume without loss of generality that $\p_2\tv_2(y)\ge 5c_2$.

Let $\zeta=(1,1)$, $s_0\in I$ and denote $\zeta(y):=\nu(p_x)+c(x)\tau(p_x)$.
By \eqref{eq:bc-change-var} we infer
\begin{align*}
\int_I |(\tv(t,\eta)-\tv(0,s_0))\cdot\zeta|d\eta &\le \int_I|(\tv(t,\eta)-\tv(0,\eta-t))\cdot\zeta|d\eta+\int_I|(\tv(0,\eta-t)-\tv(0,s_0))\cdot\zeta|d\eta \\
&\le \int_{S_1}|\zeta\cdot\nabla\tv(y)\zeta|dy+\sqrt{2}c_4(|I|+t)^2 \\
&= \int_{S_1}|(\nabF\zeta(y))\cdot \nabla\tv(y)(\nabF\zeta(y))|dy+\sqrt{2}c_4(|I|+t)^2.
\end{align*}
Summing and subtracting $(R\zeta(y))\cdot\nabla\tv(y)(\nabF\zeta(y))+\zeta(y)\cdot\tilde K(y)\zeta(y)$ on the right-hand side and applying the triangle inequality, we obtain 
\begin{align*}
\int_I |(\tv(t,\eta)-\tv(0,s_0))\cdot\zeta|d\eta &\le \int_{S_1}|\zeta(y)\cdot \big((R^t\nabla\tv(y)\nabF)^{sym}-\tilde K(y)\big)\zeta(y)|dy \\
& \qquad +\int_{S_1}|\zeta(y)\cdot \big((\nabF-R)^t\nabla\tv(y)\nabF\big)\zeta(y)|dy \\
& \qquad +\int_{S_1}|\zeta(y)\cdot\tilde K(y)\zeta(y)|dy+\sqrt{2}c_4(|I|+t)^2 \\
& \le 4\int_{S_1}\dist((R^t\nabla\tv(y)\nabF)^{sym},\tilde\K)dy \\
& \qquad + 4\|\nabla F-R\|_{L^\infty(\omega)}\int_{S_1}|\nabla\tv(y)|dy + c_3|S_1|+\sqrt{2}c_4(|I|+t)^2 \\
&\le 4|S_1|^\frac{1}{2} \tilde F_\epsilon(\tv;S_1)^\frac{1}{2} \\ 
& \qquad +4 \|\nabla F-R\|_{L^\infty(\omega)} |S_1|^\frac{1}{2}\|\nabla\tv\|_{L^2(S_1)}+c_3|S_1|+\sqrt{2}c_4(|I|+t)^2,
\end{align*}
where we have used Hölder's inequality in the last step.
By \eqref{eq:L2-v}, recalling the choice of $I$, \eqref{eq: choice slice} and \eqref{eq:small}, we conclude that
\begin{align*}
\int_I |(\tv(t,\eta)-\tv(0,s_0))\cdot\zeta|d\eta &\le 16|S_1|^\frac{1}{2}|I|^\frac{1}{2} \tilde F_\epsilon(\tv)^\frac{1}{2}+4 \|\nabla F-R\|_{L^\infty(\omega)} |S_1|^\frac{1}{2}d_1^\frac{1}{2}|I| \\
& \qquad +c_3|S_1|+\sqrt{2}c_4(|I|+t)^2 \\
&\le 16 t^\frac{1}{2} |I| \tilde F_\epsilon(\tv)^\frac{1}{2}+4 Crc_1^{-\frac{1}{2}}d_1^\frac{1}{2}|I|^2+c_3t|I|+\sqrt{2}c_4(|I|+t)^2.
\end{align*}
Arguing analogously for $\zeta=(1,-1)$, we obtain
\begin{equation}\label{eq:ineq-gen1}
\int_I|\tv_2(t,\eta)-\tv_2(0,s_0)|d\eta \le 16 t^\frac{1}{2}|I|F_\epsilon(\tv)^\frac{1}{2}+c_3t|I|+(4Cr(c_1^{-1}d_1)^\frac{1}{2}+\sqrt{2}c_4)(|I|+t)^2.
\end{equation}

We now follow the strategy of the proof of Proposition \ref{prop:ref_domain}(2).
Let $w:I\to\R$ be defined such that $w'(y)=\tau(p_{x})\cdot\tilde K(y)\tau(p_{x})$, where we recall that $x=F^{-1}(ry)$.
By \eqref{eq:dist-lin}, \eqref{eq:tangent-F} and the fact that $\p_2 v_2\ge 5c_2$ on $\{t\}\times I$, by summing and subtracting $\p_2\tv_2(y)+\tau(p_x)\cdot(R^t\nabla\tv(y)\nabla F(x))\tau(p_x)$ we observe that
\begin{align*}
w'(y_2) 
&= \p_2\tv_2(y) - \tau(p_{x})\cdot(R^t\nabla\tv(y)\nabla F(x)-\tilde K(y))\tau(p_{x}) \\
& \qquad + c(x)\big(((\nabla F(x))^{-1}-R^t) e_2\big)\cdot R^t\nabla\tv(y)\nabla F(x)\tau(p_{x}) \\
&\ge \p_2\tv_2(y)-\dist((R^t\nabla\tv(y)\nabla F(x))^{sym},\tilde\K) \\
& \qquad + c(x)\big(((\nabla F(x))^{-1}-R^t) e_2\big)\cdot R^t\nabla\tv(y)\nabla F(x)\tau(p_{x}).
\end{align*}
To control the last term on the right-hand side above, we invoke the triangle inequality with the gradient evaluated on $y_s$ and exploit \eqref{eq:grad-contr}.
Hence, summing and subtracting $c(x)\big(((\nabla F(x))^{-1}-R^t) e_2\big)\cdot R^t\nabla\tv(y_s)\nabla F(x_s)\tau(p_{x_s})$ and using the smallness assumptions \eqref{eq:small}, we obtain
\begin{align}
\label{eq:affine1}
\begin{split}
w'(y_2) &\ge \p_2\tv_2(y)-\dist((R^t\nabla\tv(y)\nabla F(x))^{sym},\tilde\K) \\
& \qquad - c(x)\big(((\nabla F(x))^{-1}-R^t) e_2\big)\cdot R^t\nabla\tv(y_s)\nabla F(x_s)\tau(p_{x_s}) \\
& \qquad + c(x)\big(((\nabla F(x))^{-1}-R^t) e_2\big)\cdot\big(R^t\nabla\tv(y_s)\nabla F(x_s)\tau(p_{x_s})-R^t\nabla\tv(y)\nabla F(x)\tau(p_{x})\big) \\
&\ge \p_2\tv_2(y)-\dist((R^t\nabla\tv(y)\nabla F(x))^{sym},\tilde\K) \\
& \qquad - 2\|\nabla F^{-1}-R^t\|_{L^\infty(Q_r)} |\nabla\tv(y_s)| - 2\|\nabla F^{-1}-R^t\|_{L^\infty(Q_r)}|\nabla\tv(y_s)-\nabla\tv(y)| \\
&\ge 5c_2 - (3+2Cr)c_2 - ((8c_1 d_1)^{\frac{1}{2}}+3)2Cr \ge c_2,
\end{split}
\end{align}
up to reducing the value of $r$. 
Proceeding similarly as above, we also obtain
\begin{align}
\label{eq:affine2}
\begin{split}
|\p_2\tv_2(y)-w'(y_2)| &\le |\tau(p_x)\cdot(R^t\nabla  \tv(y)\nabla F(x)-\tilde K(y))\tau(p_x)| \\
& \qquad + 2\|\nabla F^{-1}-R^t\|_{L^\infty(Q_r)}(|\nabla\tv(y_s)|+|\nabla \tv(y_s)-\nabla \tv(y)|).
\end{split}
\end{align}
Following the strategy of the proof of Proposition \ref{prop:ref_domain}, from the two inequalities \eqref{eq:affine1}, \eqref{eq:affine2}, we infer  for $a_1 = \fint_I \tilde{v}_2(t,\cdot) - w \, ds$
\begin{equation}\label{eq:ineq-gen2}
\begin{split}
\frac{c_2}{4}|I|^2 &\le \|\tv_2(t,\cdot)-w-a_1\|_{L^1(I)}+\|\tv_2(t,\cdot)-\tv_2(0,s_0)\|_{L^1(I)} \\
&\le |I|\|\p_2\tv_2(t,\cdot)-w'\|_{L^1(I)}+\|\tv_2(t,\cdot)-\tv_2(0,s_0)\|_{L^1(I)} \\
&\le |I|^\frac{3}{2}\|\dist((R^t\nabla\tv(t,\cdot)\nabla F_{|F^{-1}(rt,r\cdot)})^{sym},\tilde\K)\|_{L^2(I)} \\
& \qquad +C'r|I|^2+\|\tv_2(t,\cdot)- \tv_2(0,s_0)\|_{L^1(I)},
\end{split}
\end{equation}
for some constant $C'>0$ depending on $c_1, d_1$.
Gathering \eqref{eq:ineq-gen1} and \eqref{eq:ineq-gen2}, for $r$ sufficiently small, we deduce that 
$$
\frac{c_2}{8}|I|^2 \le \Big(\frac{c_2}{4}-C'r\Big)|I|^2 \le 4|I|^2\tilde F_\epsilon(\tv;\{t\}\times(0,1))^\frac{1}{2}+c_3t|I|+2\sqrt{2}c_4(|I|+t)^2+16t^\frac{1}{2}|I|\tilde F_\epsilon(\tv)^\frac{1}{2}
$$
which, recalling that $2\sqrt{2}c_4(|I|+t)^2=\frac{c_2}{16}|I|^2$, can be further reduced to
$$
\frac{c_2}{16}|I|^2 \le 4|I|^2\tilde F_\epsilon(\tv;\{t\}\times(0,1))^\frac{1}{2}+c_3t|I|+16t^\frac{1}{2}|I|\tilde F_\epsilon(\tv)^\frac{1}{2}.
$$
This recovers an estimate  similar to  \eqref{eq:ref-last-ineq} for $F_\epsilon(\tv)$ and thus yields the claimed lower bound for $F_{\epsilon}(\tilde{v}; \{t\}\times (0,1))$  by a similar argument.

\emph{Step 3: conclusion.} We conclude the estimate $\tilde{F}_{\epsilon}(\tilde v) \geq c \min\{ \epsilon (|\log \epsilon| + 1),1\}$ from \eqref{eq: est claim rotated} using an argument that is similar to Lemma \ref{lem: lb log} but uses the conclusion of the claim in step 2 which differs slightly from the assumptions in Lemma \ref{lem: lb log}.

Let $L \in \N$ be the largest $L$ such that $2^{-L-1} \geq \frac{1}{4c_1} \min\{1, \epsilon (|\log \epsilon| + 1)\}$. Additionally, let $N \in \N$ be the smallest $N$ such that $2^{-N} \leq \frac1{2c_1}$. 
It follows for some $\tilde{c}>0$ that 
\begin{align}
\begin{split}
(L-N - 1) \log(2) &\geq \log(2) - \log \left( \min\left\{ \epsilon (|\log \epsilon| + 1),1 \right\} \right) \\
&\geq \tilde{c} \max\{ 1, 1 - \log(\epsilon) \}. \label{eq: estimate number}
    \end{split}
\end{align}
Next, we estimate 
\begin{equation}\label{eq: energy partition}
	\begin{split}
    2 \tilde F_\epsilon(\tv) &\geq  \sum_{k=N}^{L-1} \int_0^1\int_{2^{-k-2}}^{2^{-k}} \dist^2((R^t\nabla\tilde v(x,y)\nabla F_{|F^{-1}(rx,ry)})^{sym},\tilde\K)dx dy \\
    & \qquad +\epsilon |D^2\tilde v|((2^{-k-2},2^{-k}) \times (0,1)).
    \end{split}
\end{equation}
Fix $N \leq k\leq L-1$. We now distinguish two cases.

\emph{Case 1:}
Let us assume first that we have for all $t \in (2^{-k-1},2^{-k})$ that $\tilde{F}_{\epsilon}(\tv;\{t\} \times (0,1)) \geq c \min\left\{ \frac{\epsilon}{t},1\right\}$. 
By the definition of $N$, if $\epsilon \leq 1$, it holds $\min\left\{ \frac{\epsilon}{t},1\right\} \geq \frac1{4c_1} \frac{\epsilon}{t}$. 
Consequently, in this case we observe
\begin{align*}
&\int_0^1\int_{2^{-k-2}}^{2^{-k}} \dist^2((R^t\nabla\tilde v(x,y)\nabla F_{|F^{-1}(rx,ry)})^{sym},\tilde\K)dx dy+\epsilon |D^2\tilde v|((2^{-k-2},2^{-k}) \times (0,1)) \\
\geq &\int_0^1\int_{2^{-k-1}}^{2^{-k}} \dist^2((R^t\nabla\tilde v(x,y)\nabla F_{|F^{-1}(rx,ry)})^{sym},\tilde\K)dx dy+\epsilon |D^2\tilde v|((2^{-k-1},2^{-k}) \times (0,1)) \\
\geq &\int_{2^{-k-1}}^{2^{-k}} \frac{c}{4c_1} \frac{\epsilon}{t} \, dt = \epsilon \frac{c}{4c_1} \log(2).
\end{align*}
Similarly, if $\epsilon \geq 1$, it holds $\min\left\{ \frac{\epsilon}{t},1\right\} = 1$ for all $t \in (2^{-k-1},2^{-k})$ and hence
\begin{multline*}
\int_0^1\int_{2^{-k-2}}^{2^{-k}} \dist^2((R^t\nabla\tilde v(x,y)\nabla F_{|F^{-1}(rx,ry)})^{sym},\tilde\K)dx dy+\epsilon |D^2\tilde v|((2^{-k-2},2^{-k}) \times (0,1)) \\
\geq c2^{-k-1}\geq  \frac{c}{4 c_1} \min\{1, \epsilon (|\log \epsilon| + 1)\},
\end{multline*}
where we used the definition of $L$.

\emph{Case 2:} Let us now assume that there exists $t \in (2^{-k-1},2^{-k})$ such that $|D^2 \tv|((t/2,t) \times (0,1)) \geq c$. 
It follows that
\begin{multline*}
\int_0^1\int_{2^{-k-2}}^{2^{-k}} \dist^2((R^t\nabla\tilde v(x,y)\nabla F_{|F^{-1}(rx,ry)})^{sym},\tilde\K)dx dy+\epsilon |D^2\tilde v|((2^{-k-2},2^{-k}) \times (0,1)) \\
\geq \epsilon c \geq \epsilon \frac{c}{4c_1} \log(2).
\end{multline*}

Combining the bounds from above, by the conclusion from the claim in step 2, \eqref{eq: estimate number} and \eqref{eq: energy partition}, we find
\begin{align*}
2 \tilde F_\epsilon(\tv) 
&\geq \epsilon \frac{c}{4c_1} \log(2) (L-N-1) \geq \frac{c \tilde{c}}{4c_1} \min\{1,\epsilon\} \max\{ 1, 1-\log(\epsilon) \} \\
&\geq  \frac{c \tilde{c}}{4c_1} \min\{1, \epsilon (|\log \epsilon| + 1)\},
\end{align*}
which concludes the proof of the theorem.
\end{proof}

\section{Proofs of Theorems \ref{thm:main_gen} and \ref{thm:main}}
\label{sec:theorems_proof}
In this section, we present the proofs of Theorems \ref{thm:main_gen} and \ref{thm:main}. We begin by discussing the geometrically linear case and then turn to the geometrically nonlinear one in the next subsection.

\subsection{Proof of Theorem \ref{thm:main_gen}}

Heading towards the proof of Theorem \ref{thm:main_gen}, we first provide the short argument for Lemma \ref{lem:normals}. Essentially, this can be found in \cite{CDPRZZ20}, we repeat it in our setting for the convenience of the reader.

\begin{proof}[Proof of Lemma \ref{lem:normals}]
The presence of an interface between austenite and the martensite variant $e^{(j)}$, $j \in \{1,2,3\}$, reads
\begin{align*}
\tau \cdot e^{(j)} \tau = 0 \mbox{ for some } \tau \in S^1.
\end{align*}
This is equivalent to
\begin{align*}
(Q_{\ell}^t \tau) \cdot \begin{pmatrix} 1 & 0 \\ 0 & -1 \end{pmatrix}  (Q_{\ell}^t \tau) = 0 \mbox{ for some } \tau \in S^1,
\end{align*}
and for some $Q_{\ell}$ as in \eqref{eq:rot} in Section \ref{sec:appl_lin}. Now, up to an action of $Q_{\ell}$, it then suffices to solve the equation
\begin{align*}
v \cdot \begin{pmatrix} 1 & 0 \\ 0 & -1 \end{pmatrix} v = 0 
\end{align*}
for $v \in S^1$.
Up to normalization, the solutions to this are given by $e_1 + e_2$ and $e_1 - e_2$. Applying the action of $Q_{\ell}$ then proves the claimed characterization \eqref{eq:normals} for the possible austenite-martensite interface normals.
\end{proof}

With Lemma \ref{lem:normals} in hand, we turn to the proof of Theorem \ref{thm:main_gen}.

\begin{proof}[Proof of Theorem \ref{thm:main_gen}]

\emph{Step 1: Proof of Theorem \ref{thm:main_gen}(b).}
We begin by proving the claim of Theorem \ref{thm:main_gen}(b). For $\epsilon>1$, the bound follows by considering $v= 0$. For $\epsilon \in (0,1)$ the claimed scaling behaviour is a consequence of the stress-free construction given in \cite{CDPRZZ20}. There it is shown that it is possible to construct an exactly stress-free $BV$ regular (in the gradient) deformation in a certain rotated equilateral triangular domain (which, as in our current setting, only uses rotations as symmetries instead of the full point group consisting of rotations and reflections). By applying a suitable rotation of the form $Q_j$, $j\in \{1,2,3\}$, it is then possible to produce such a construction involving an arbitrary normal of the six possible normals with only linear energy scaling (since the elastic energy vanishes and the $BV$ norm is finite), see Figure \ref{fig:star}.

In order to conclude the argument for Theorem \ref{thm:main_gen}(b), it remains to prove the lower bound for the energy. To this end, we invoke the boundary condition together with the Poincar\'e and Hölder inequalities. 

Indeed, by Poincar\'e and Hölder, we obtain that
\begin{align*}
|D^2 v|(\Omega) \geq \tilde{C}_{\Omega} \|\nabla v - \langle \nabla v \rangle_{\Omega} \|_{L^{\frac{d}{d-1}}(\Omega)}
= \tilde{C}_{\Omega} \|\nabla v  \|_{L^{\frac{d}{d-1}}(\Omega)}
\geq C_{\Omega} \|\nabla v\|_{L^1(\Omega)}.
\end{align*}
Here,
\begin{align*}
\langle \nabla v \rangle_{\Omega} = |\Omega|^{-1} \int\limits_{\Omega} \nabla v(x) dx = 0,
\end{align*}
where we invoked the assumption that $v = 0$ on $\partial \Omega$.

Hence,
\begin{align*}
F_{\epsilon}(v)
& = \epsilon |D^2 v|(\Omega) + \int\limits_{\Omega} \dist^2(\nabla v(x), \tilde{\K} + \text{Skew}(2)) dx\\
& \geq \min\{C_{\Omega},1 \} \min\{\epsilon,1\} \left( \int\limits_{\Omega}|\nabla v(x)| + \dist^2(\nabla v(x), \tilde{\K} + \text{Skew}(2)) dx \right)\\
& \geq C' \min\{C_{\Omega},1 \} \min\{\epsilon,1\} |\Omega | \min\{\dist(0, \tilde{\K} + \text{Skew}(2) ), \dist^2(0, \tilde{\K} + \text{Skew}(2) ) \}.
\end{align*}
In the last estimate we used the fact that there exists a constant $C'>0$ such that $A+B^2 \geq C'\min\{A+B, (A+B)^2\}$ for all $A,B \geq 0$ and that by the triangle inequality we have
\begin{align*}
|\nabla v(x)| + \dist(\nabla v(x), \tilde{\K} + \text{Skew}(2)) \geq \dist(0, \tilde{\K} + \text{Skew}(2) ).
\end{align*}
This concludes the lower bound.\\

\emph{Step 2: Proof of Theorem \ref{thm:main_gen}(a).}
In order to prove the claim of Theorem \ref{thm:main_gen}(a) we split the argument into two steps. \\

\emph{Step 2a: The lower bound.} Firstly, we discuss the lower bound estimate.
To this end, it suffices to verify that condition \eqref{eq:dist-lin} is satisfied which then allows to invoke Theorem \ref{prop:gen_domain_lin}.
In case that $\Omega$ is polygonal, this immediately follows from the fact that $\tilde{\K}_3$ is discrete with only a discrete set of normals satisfying the linearized Hadamard jump conditions while the boundary of $\Omega$ has a normal field which onto contains an open subset of $S^1$. 
In case that $\Omega$ is polygonal with a planar part of the boundary $\Gamma$ such that the associated normal is not in the set $\tilde{\mathcal{N}}$, we also infer that the condition \eqref{eq:dist-lin} is satisfied, as 
\begin{align*}
\tau \cdot e^{(j)} \tau = 0 \mbox{ for some } j\in \{1,\dots,n\},
\end{align*} 
iff $\tau \in \tilde{\mathcal{N}}$.\\

\emph{Step 2b: The upper bound.}
Secondly, we turn to the upper bound in Theorem \ref{thm:main_gen}(a). To this end, we need to provide an explicit microstructure. As in \cite[Section 6]{RZZ16} (see also \cite{G23} and \cite[Section 7]{B3} for related constructions) this can be achieved by a dyadic, greedy covering argument and the building block microstructure from the proof of Theorem \ref{thm:main_gen}(b). More precisely, we greedily cover $\Omega$ by dyadic lattices consisting of the building block microstructures from Step 1. By the domain regularity, it suffices to consider domain patches $\Omega_f$ which are Lipschitz graphs with Lipschitz constant $Lip(f)$ over the base line of the building block construction. Moreover, we may assume that $0 \in \inte(\Omega_f)$ and that $\Omega_f$ is a star-shaped domain with zero being one possible star point. 
Then, we cover $\Omega_f$ by dyadic lattice triangles. Indeed, we set $\Omega_{f,\ell}:= \bigcup\limits_{j=1}^{K_{\ell}} \Delta_{\ell}^j$, where $\Delta_{\ell}^j$ are lattice triangles which are oriented as one of the the admissible building block triangles and are of scale $2^{-\ell}$. By the Lipschitz regularity of the boundary of $\Omega_f$, for $\ell>0$ sufficiently large, we may assume that $\tilde{\Omega}_{\ell}:= (1-C_f 2^{-\ell}) \Omega_f$ is completely covered by such triangles of scale $2^{-\ell}$. Here 
\begin{align*}
(1-C_f 2^{-\ell}) \Omega_f:=\left\{ x \in \Omega: \ (1-C_f 2^{-\ell})^{-1} x \in \Omega \right\}
\end{align*}
and $C_f:= 10 Lip(f)+10$. 
\begin{figure}[t]
\includegraphics[width = 4 cm]{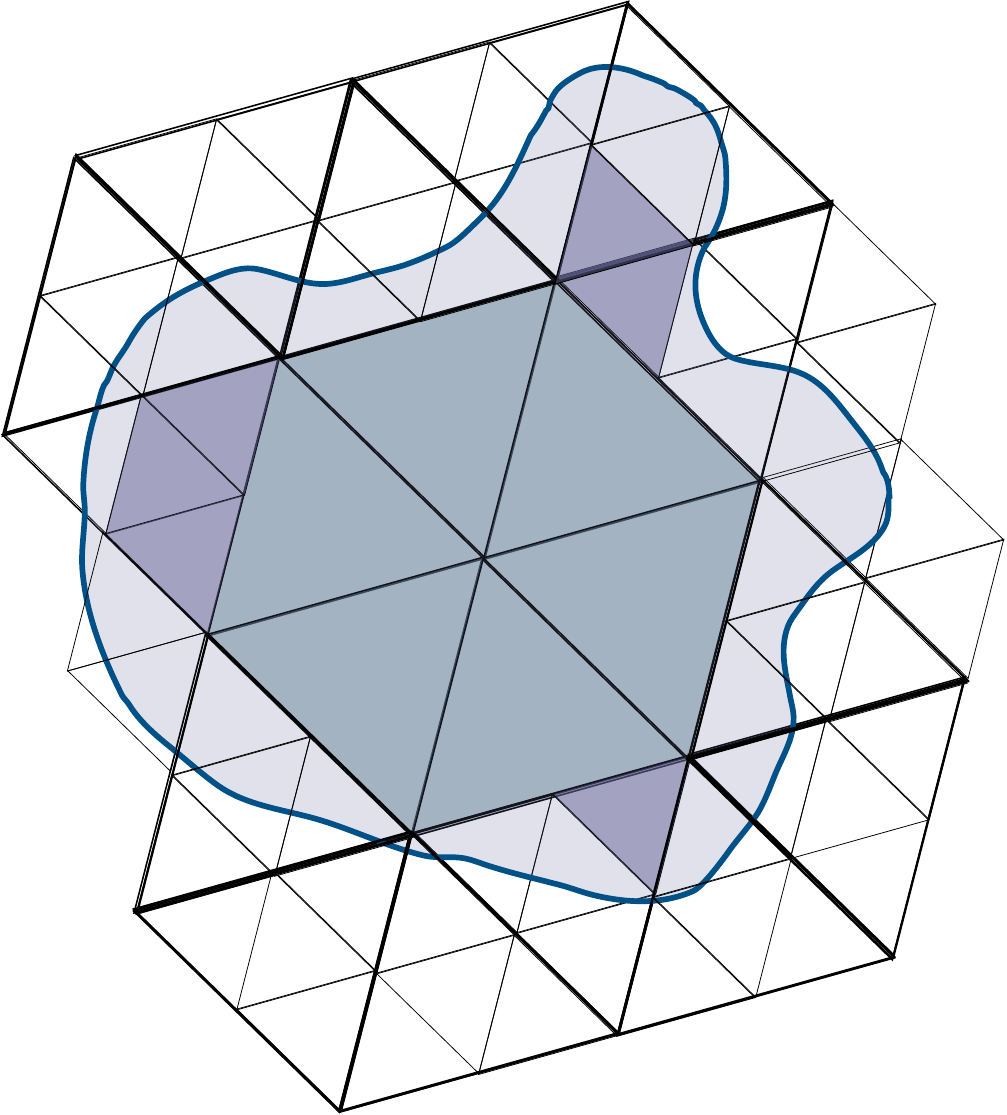}
\caption{A greedy covering of a domain by the triangle building blocks. The dyadic refinement leads to the logarithmic losses in the scaling law.}
\label{fig:covering}
\end{figure}
Furthermore, we observe that by a volume estimate the set $\tilde{\Omega}_{\ell+1}\setminus \tilde{\Omega}_{\ell+1}$ contains at most $(20C_f+ 20)2^{\ell+1}$ many lattice triangles of scale $2^{-(\ell +1)}$.
Thus, we conclude that for any $m\in \N$ it is possible to cover $\Omega$ by such construction up to a boundary layer of size $\sim 2^{-m}$ by using at most $(20C_f+ 20) 2^{\ell}$ triangles of scale $2^{-\ell}$ with $\ell \in \{0,\dots,m\}$. 
In each of these triangles there is no elastic energy and the surface energy is proportional to the side length of the triangle. In the boundary layer we let the deformation be equal to the identity; hence the boundary layer in turn carries an elastic energy proportional to its volume. As a consequence, denoting the resulting deformation at refinement scale $m$ to be $v_m$, we obtain
\begin{align*}
\E(v_m,\Omega) \sim 2^{-m} + \epsilon m.
\end{align*}
Hence, choosing $m \sim -\log(\epsilon)$ implies the claimed upper bound.
\end{proof}

\subsection{Proof of Theorem \ref{thm:main}}

The argument for Theorem \ref{thm:main} follows similarly as for the geometrically linearized case. 

\begin{proof}[Proof of Theorem \ref{thm:main}]
The first claim follows from discreteness and the explicit characterization of the austenite-martensite interfaces given in the sets $\mathcal{N}_3, \mathcal{N}_4$. The second statement and the upper bound construction follows (up to a suitable rotation) from the explicit constructions in \cite{CKZ17} and \cite{CDPRZZ20}.
\end{proof}

\section*{Acknowledgement} 
A.R.~and A.T.~gratefully acknowledge funding by the Deutsche Forschungsgemeinschaft (DFG, German Research Foundation) through SPP 2256, project ID 441068247. All authors  were partially supported by the Hausdorff Institute for Mathematics at the University of Bonn which is funded by the Deutsche Forschungsgemeinschaft (DFG, German Research Foundation) under Germany's Excellence Strategy – EXC-2047/1 – 390685813, as part of the Trimester Program on Mathematics for Complex Materials. A.R. is supported by the Hausdorff Center for Mathematics
which is funded by the Deutsche Forschungsgemeinschaft (DFG, German Research Foundation)
under Germany’s Excellence Strategy – EXC-2047/1.
J.G.~and B.Z.~gratefully acknowledge the support of the Deutsche Forschungsgemeinschaft (DFG, German Research Foundation) via project 195170736 - TRR 109 and BZ via project 211504053 - SFB 1060.

\bibliographystyle{alpha}
\bibliography{citations5}

\end{document}